 \DeclareMathOperator{\perm}{Sym}
 \DeclareMathOperator{\soc}{soc}
\DeclareMathOperator{\aut}{Aut} \DeclareMathOperator{\out}{Out}
 \DeclareMathOperator{\frat}{Frat}
\DeclareMathOperator{\sym}{Sym}
\DeclareMathOperator{\core}{Core}
 \DeclareMathOperator{\diag}{Diag}
\newtheorem{thm}{Theorem}
\newtheorem{cor}[thm]{Corollary}
 \newtheorem{lemma}[thm]{Lemma}
\newtheorem{prop}[thm]{Proposition} \newtheorem{rem}[thm]{Remark}
\newtheorem{question}[]{Question} 
\numberwithin{equation}{section}
\renewcommand{\footnote}{\endnote}
\newcommand{\ignore}[1]{}\makeglossary
\begin{document}

\title[The Minimum Generating Set
Problem]{The Minimum Generating Set
	Problem}
	
		\author[A.~Lucchini]{Andrea Lucchini}
	\address[Andrea Lucchini]{Universit\`a di Padova, Dipartimento di Matematica \lq\lq Tullio Levi-Civita\rq\rq}
	\email{lucchini@math.unipd.it}
	\author[D.~Thakkar]{Dhara Thakkar}
	\address[Dhara Thakkar]{Indian Institute of Technology Gandhinagar, Gandhinagar}
	\email{thakkar\_dhara@iitgn.ac.in}

 \begin{abstract}Let $G$ be  a finite group. In order to determine the smallest cardinality $d(G)$ of a generating set of $G$ and a generating set with this cardinality, one should repeat \lq many times\rq \ the test whether a subset of $G$ of \lq small\rq \ cardinality generates $G$. We prove that if a chief series of $G$ is known, then the numbers of these \lq generating tests\rq \ can be drastically reduced. At most $|G|^{13/5}$ subsets must be tested. This implies that the minimum generating set problem for a finite group $G$ can be solved in polynomial time. \end{abstract}

\maketitle

\section{Introduction} Let $G$ be a finite group. A generating set of $G$ with minimum size is called a
minimum generating set. The size of a minimum generating set of a group $G$ is
denoted by $d(G).$ In this paper, we consider the problem of computing $d(G)$, known as the minimum generating set problem, and computing a minimum generating set of a given group $G$. We call \lq\lq a generating test\rq\rq \ for $G$, the check whether a given subset of $G$ generates $G.$ The computational cost of a single generating test for a given group $G$ may depends of the size of $G$ and of the subset that we are testing, and also of the way in which $G$ is assigned ($G$ could be a represented by permutation group representation, a multiplication table, a linear group representation, or an abstract group assigned with a presentation), but in any case an algorithm aimed to determine a minimum generating set becomes more efficient the smaller number of generating tests it requires. This motivates the following question.
\begin{question}\label{question-1}
How many generating tests are needed to determine a minimum generating set of a finite group?
\end{question}

Question \ref{question-1} has been studied before, mainly for groups represented by their multiplication table or by their permutation representation. With these representations of groups, it is easy to see that the minimum generating set problem is in the complexity class $\mathrm{NP}$, as there is a polynomial time algorithm to check if a given subset generates the given group \cite{seress}. Arvind and Tor\'an prove that the minimum generating set problem is in $\mathrm{DSPACE}(\log^2 n)$ \cite{AT}. Recently, the problem has been studied for general and special classes of groups by Das and Thakkar \cite{DT}. However, it is still open if the minimum generating set problem admits a polynomial time algorithm or equivalently if Question \ref{question-1} could be solved using at most polynomial many generating tests even for groups given by their multiplication tables. Information on the complexity classes $\mathrm{NP}$ and $\mathrm{DSPACE}(\log^2 n)$ can be found for example in \cite{Complexity-text}.

Notice that, for every finite group $G$, $d(G)\leq \log_p|G|$ being $p$ the smallest prime factor of $|G|$, so at least one set $\{g_1,\dots,g_d\}$ with $d\leq \log_p(|G|)$ and $1\neq g_i \in G$ for
$1\leq i\leq d$ is a minimum generating set. Hence a crude bound for the number of generating tests that are needed is $\sum_{t\leq \log_p(|G|)}(|G|-1)^t.$

A better strategy is suggested by the following  nice  remark  due to Gasch\"utz \cite {gaz1}.
	\begin{lemma}\label{gaz} Let $N$ be a normal subgroup of $G$ and let $g_1,
		\dots, g_d \in G$ be such that $G/N=\langle g_1N, \dots, g_dN \rangle$. If $G$
		can be generated with $d$ elements then there exist $u_1, \dots, u_d \in N$ such
		that $G=\langle g_1u_1, \dots, g_du_d \rangle.$ \end{lemma}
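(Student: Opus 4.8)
The plan is to fix $Q:=G/N$ with quotient map $\pi\colon G\to Q$ and, for each generating $d$-tuple $\bar{\mathbf x}=(\bar x_1,\dots,\bar x_d)$ of $Q$, to count the number $L(\bar{\mathbf x})$ of its \emph{generating lifts}, i.e.\ the tuples $(y_1,\dots,y_d)\in G^d$ with $\pi(y_i)=\bar x_i$ for all $i$ and $\langle y_1,\dots,y_d\rangle=G$. I would prove that $L(\bar{\mathbf x})$ does not depend on $\bar{\mathbf x}$. Granting this, the lemma follows at once: since $d(G)\le d$, padding a generating set of $G$ of size $d(G)$ with copies of $1$ yields a generating $d$-tuple of $G$, whose $\pi$-image is a generating $d$-tuple $\bar{\mathbf x}_0$ of $Q$ admitting a generating lift, so $L(\bar{\mathbf x}_0)\ge 1$; by independence, $L(g_1N,\dots,g_dN)\ge 1$ as well, and a generating lift of $(g_1N,\dots,g_dN)$ is exactly a tuple $(g_1u_1,\dots,g_du_d)$ with $u_i\in N$ generating $G$.

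For the independence I would argue by M\"obius inversion over the subgroup lattice of $G$. For $\Phi\le G$ let $F(\Phi)$ be the number of tuples $(y_1,\dots,y_d)\in\Phi^d$ with $\pi(y_i)=\bar x_i$ for all $i$. If $\Phi N\ne G$ then $\pi(\Phi)$ is a proper subgroup of $Q$, so it cannot contain all the generators $\bar x_i$ and $F(\Phi)=0$. If $\Phi N=G$ then $\pi$ restricts to a surjection $\Phi\to Q$ with kernel $\Phi\cap N$, so for each $i$ the elements of $\Phi$ mapping to $\bar x_i$ form a single coset of $\Phi\cap N$, whence $F(\Phi)=|\Phi\cap N|^d$. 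In both cases $F(\Phi)$ depends only on $\Phi$, not on $\bar{\mathbf x}$.

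On the other hand a tuple lies in $\Phi^d$ exactly when the subgroup it generates is contained in $\Phi$, so $F(\Phi)=\sum_{\Psi\le\Phi}f(\Psi)$, where $f(\Psi)$ is the number of lifts of $\bar{\mathbf x}$ generating exactly $\Psi$. M\"obius inversion over the finite lattice of subgroups of $G$ then gives
\[
L(\bar{\mathbf x})=f(G)=\sum_{\Phi\le G}\mu(\Phi,G)\,F(\Phi)=\sum_{\substack{\Phi\le G\\\Phi N=G}}\mu(\Phi,G)\,|\Phi\cap N|^{d},
\]
whose right-hand side does not involve $\bar{\mathbf x}$; this completes the argument.

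The proof is short once set up, so there is no real obstacle; the only point needing care is the evaluation of $F(\Phi)$, in particular recognising that $\Phi N=G$ is precisely the condition under which $\bar{\mathbf x}$ admits a lift inside $\Phi$ (here one uses that the $\bar x_i$ generate $Q$) and that under this condition the coordinates can be lifted independently, each within a coset of $\Phi\cap N$; the M\"obius step is then purely formal. A more classical alternative would instead reduce, by induction on $|N|$, to the case where $N$ is a minimal normal subgroup of $G$, and then observe that any lift of $\bar{\mathbf x}$ generates either $G$ or a complement of $N$ in $G$, distinct complements coming from distinct lifts, so that $L(\bar{\mathbf x})=|N|^{d}-c$ with $c$ the number of complements of $N$ in $G$, again independent of $\bar{\mathbf x}$. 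In that route the genuinely delicate case is when $N$ is a non-abelian chief factor, where the role of ``complement'' has to be analysed more carefully; the M\"obius argument above has the advantage of treating all $N$ uniformly.
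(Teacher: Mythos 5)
Your argument is correct and complete: the evaluation of $F(\Phi)$ in the two cases $\Phi N\ne G$ and $\Phi N=G$ is right (the first uses exactly that the $\bar x_i$ generate $Q$, the second that lifting can be done coordinatewise within cosets of $\Phi\cap N$), and the M\"obius inversion over the finite subgroup lattice then yields the manifestly $\bar{\mathbf x}$-independent formula for the number of generating lifts, from which the lemma follows as you say. Note that the paper itself offers no proof of this statement: it is quoted as a classical remark of Gasch\"utz with a reference to \cite{gaz1}, so there is no ``paper proof'' to compare against; your counting argument is essentially the standard proof of Gasch\"utz's lemma (Gasch\"utz's original argument is also a count of generating lifts, usually organised as an induction over supplements of $N$ rather than via an explicit M\"obius function, but the content is the same). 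One caveat: the ``more classical alternative'' you sketch at the end is not correct as stated for a non-abelian minimal normal subgroup $N$, since a proper subgroup $H$ with $HN=G$ need then not be a complement ($H\cap N$ need not be trivial, as it is only normal in $H$, not in $G$); you flag this yourself, and since your main argument treats all $N$ uniformly, this does not affect the validity of your proof.
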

Suppose indeed that a chief series 

		$$1=N_0<N_1<\dots N_{u-1} <N_u=G$$
		of $G$ is available. The factor $G/N_{u-1}$ is a simple group, in particular $d(G/N_{u-1})\leq 2$, so at most $|G/N_{u-1}|^2$ generating tests are needed to find a subset $Y$ of minimum size with respect to the property that $G=\langle Y\rangle N_{u-1}.$ This can be considered a first step of an algorithm. For $0<i<u$ suppose that $\{g_1,\dots,g_d\}$ is a minimum generating set for $G$ modulo $N_i.$ By the main result in \cite{alold},
		$d(G/N_{i-1})\leq d+1.$ If  $d(G/N_{i-1})=d$, then by Lemma \ref{gaz} there exist $n_1,\dots,n_d \in N_i$ such that $\{g_1n_1,\dots,g_dn_d\}$ is a minimum generating set for $G$ modulo $N_{i-1}.$ If  $d(G/N_{i-1})=d+1$, then again by Lemma \ref{gaz} there exists $n_1,\dots,n_d,n_{d+1} \in N_i$ such that $\{g_1n_1,\dots,g_dn_d,n_{d+1}\}$ is a minimum generating set for $G$ modulo $N_{i-1}.$ In other words, if we know a minimum generating set for $G$ modulo $N_i$, then at most $|N_i/N_{i-1}|^d+|N_i/N_{i-1}|^{d+1}$ generating tests are needed to determine a minimum generating set for $G$ modulo $N_{i-1}.$ This approach reduces the number of generating tests, but this number remains in general quite large, and it does not give any evidence that a polynomial bound in terms of the order of $G$ is possible.
	Our main contribution is to show that the number of generating tests needed to obtained a minimum generating set of $G/N_{i-1}$ from a minimum generating set of $G/N_{i}$ can be drastically reduced (see in particular Corollaries \ref{coroab} and \ref{corononabb}). As a consequence, we obtain we following unexpected result.

\begin{thm}\label{main}The number of generating tests needed to determine a minimum generating set of a finite group $G$ is at most $|G|^{\frac{13}{5}}$.
\end{thm}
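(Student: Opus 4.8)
The plan is to combine the chief-series induction sketched in the introduction with the promised drastic reduction of the number of generating tests at each step (Corollaries \ref{coroab} and \ref{corononabb}). Fix a chief series $1=N_0<N_1<\dots<N_u=G$. The algorithm proceeds by computing, for $i=u-1,u-2,\dots,0$, a minimum generating set of $G/N_i$ from one of $G/N_{i+1}$. The first step produces a minimum generating set of the simple group $G/N_{u-1}$ using at most $|G/N_{u-1}|^2$ tests. For the inductive step, one invokes the two corollaries: when the chief factor $N_i/N_{i-1}$ is abelian one uses Corollary \ref{coroab}, and when it is nonabelian one uses Corollary \ref{corononabb}; in each case the number of generating tests needed to pass from a minimum generating set of $G/N_i$ to one of $G/N_{i-1}$ is bounded by something much smaller than the naive $|N_i/N_{i-1}|^{d}+|N_i/N_{i-1}|^{d+1}$, in fact by a quantity polynomial in $|N_i/N_{i-1}|$ with a small explicit exponent.

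The heart of the argument is therefore to add up these per-step bounds over the whole chief series and check that the total does not exceed $|G|^{13/5}$. Writing $a_i=|N_i/N_{i-1}|$, so that $\prod_i a_i=|G|$, each step contributes at most $C\,a_i^{c}$ generating tests for suitable constants coming from the two corollaries (with the worst exponent $c$ being the one I would expect to be around $13/5$, dictated by the nonabelian case, where one must locate the correct "diagonal" complements and this costs a number of tests governed by the structure of $\aut$ of the simple factor and the number of factors in the socle). Since $\sum_i a_i^{c}\le \bigl(\sum_i a_i\bigr)^{c}\le \bigl(\prod_i a_i\bigr)^{c}=|G|^{c}$ once all $a_i\ge 2$ (which holds as each chief factor is nontrivial), the sum telescopes into a single power of $|G|$; a little care with the multiplicative constant $C$ and the length $u\le\log_2|G|$ of the series absorbs the lower-order terms, and one concludes the bound $|G|^{13/5}$.

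The main obstacle I anticipate is the nonabelian chief factor case and the precise bookkeeping of the exponent. For an abelian chief factor $N_i/N_{i-1}\cong \FF_p^k$ the relevant count is essentially linear-algebraic: by Gasch\"utz's lemma (Lemma \ref{gaz}) one only needs to search over coset representatives modulo the action of an appropriate stabilizer, and the number of tests is polynomial in $p^k=a_i$ with a small exponent. For a nonabelian chief factor $S^k$ with $S$ simple, the subtlety is that $d(G/N_{i-1})$ may jump by one (the result from \cite{alold} guarantees it jumps by at most one), and one must efficiently decide which case occurs and, if no jump, find the lift $g_1n_1,\dots,g_dn_d$; the honest bound here involves the number of $\aut(S)$-classes of generating tuples of $S^k$ and is where the exponent $13/5$ must come from. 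I would expect that keeping this exponent as small as $13/5$ — rather than something larger — is exactly what Corollary \ref{corononabb} is engineered to deliver, so the proof of Theorem \ref{main} itself is then a relatively short summation once those corollaries are in hand.
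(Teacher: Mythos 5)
Your overall strategy is the paper's: iterate along a chief series, using Corollary \ref{coroab} for abelian factors and Corollary \ref{corononabb} for non-abelian ones, and then sum the per-step costs. But the summation, which is essentially the entire content of the paper's proof of Theorem \ref{main} once the corollaries are in hand, is where your argument has a genuine gap. You assume each step costs at most $C\,a_i^{c}$ with $c\approx 13/5$ and an absolute constant $C$, and then conclude via $\sum_i a_i^{c}\le(\prod_i a_i)^{c}=|G|^{c}$, saying the constants and the series length are "absorbed". That is not what the corollaries deliver, and the absorption is exactly the non-trivial point. Corollary \ref{corononabb} bounds the cost of a non-abelian factor $N$ by $2|N|^{\lceil 8/5+\log_{|N|}\eta_G(N)\rceil}\le 2\,\eta_G(N)\,|N|^{13/5}$, where the multiplicity $\eta_G(N)$ (number of chief factors of the same order) can grow with the chief length, so the per-step cost is not $C|N|^{13/5}$ with an absolute constant; similarly Corollary \ref{coroab} costs about $d|N|$ with $d$ as large as $\log_2|G|$. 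Since the target bound is exactly $|G|^{13/5}$ with no constant in front, an argument that ends with "$C|G|^{13/5}$ plus lower-order terms" does not prove the statement.

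The paper closes this gap by explicit bookkeeping: it splits $|G|=\alpha(G)\beta(G)$ into the abelian and non-abelian parts of the chief series, bounds the abelian contribution by $d(G)\alpha(G)$, groups the non-abelian factors by order with representatives $Y_i$ and multiplicities $\eta_G(Y_i)$, rewrites each class's contribution as $2\eta_G(Y_i)^2|Y_i|^{13/5}$, absorbs the multiplicities \emph{into the exponent} via $2\eta_G(Y_i)^2|Y_i|^{13/5}\le |Y_i|^{\frac{13}{5}\eta_G(Y_i)}$, and sums to get at most $\beta(G)^{13/5}$, concluding with $d(G)\alpha(G)+\beta(G)^{13/5}\le |G|^{13/5}$. (Note also that the paper does not need your separate opening step costing $|G/N_{u-1}|^2$ tests; the top factor is handled by the same corollaries, which matters because every extra additive term has to fit under the exact bound.) To repair your write-up you would need to carry the $\eta_G(N)$ factors explicitly and perform this grouping-by-order absorption, rather than appealing to an absolute per-step constant.
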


Our procedure requires the knowledge of a chief series of the group, that in any case can be computed in polynomial time (see for example \cite[6.2.7]{seress}), so we may conclude that the problem of finding a minimum generating set for a finite group $G$ has a solution which is polynomial on $|G|$.
	
Our proof of Theorem \ref{main} uses the theory of crowns of finite groups (see Section \ref{crowns}) and some results about the generation of finite groups with a unique minimal normal subgroup that strongly depend on the classification of the finite non-abelian simple groups.


\section{Chief series and chief factor}

Let $G$ be a nontrivial finite group. Recall that
a chief series of a finite group $G$ is a normal series
$$1=N_0 < N_1 < \dots < N_u = G$$
of finite length with the property that for $i\in \{0,\dots, u-1\}$, $N_{i+1}/N_i$ is a minimal normal subgroup of $G/N_i$. The integer $u$ is called the length of the series and the factors  $N_{i+1}/N_i$, where  $0\leq i \leq u-1$,   are called the chief factors of the series.
A nontrivial finite group $G$ always possesses a chief series. Moreover, two chief series of $G$ have the same length, and any two chief series of $G$ are the same up to permutation and isomorphism. Thus, adopting the notation above, we may define {the chief length} of $G$ to be $u$, and the chief factors of $G$ to be the groups $N_{i+1}/N_i$. 

In the following we will denote by $\frat(G)$ the Frattini subgroup of a finite group $G$. Moreover if $H/K$ is a chief factor of $G$, we say that $H/K$ is {Frattini} if $H/K\leq \frat(G/K)$ and that $H/K$ is complemented if there exists a subgroup $U$ of $G$ such that $UH=G$ and $U\cap H=K$.  Since the Frattini subgroup of a finite group $G$ is nilpotent, and the only subgroup supplementing $\frat(G)$ is $G$ itself, the following lemma is immediate.
\begin{lemma}\label{l:1}
	Let $G$ be a nontrivial finite group and let $A=H/K$ be a chief factor of $G$. 
	\begin{enumerate}[(i)]
		\item If $A$ is  abelian then $A$ is non-Frattini if and only if $A$ is complemented.  
		\item If $A$ is non-abelian then $A$ is non-Frattini. 
	\end{enumerate}
\end{lemma}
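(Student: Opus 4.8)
The plan is to first reduce to the case $K=1$ by passing to the quotient $G/K$: Frattini containment, complementation, and minimal normality are all internal to $G/K$, so we may assume throughout that $K=1$, and then $A=H$ is a minimal normal subgroup of $G$. I would use three standard facts about the Frattini subgroup of a finite group, all of which are recorded or alluded to in the paragraph preceding the statement: that $\frat(G)$ is the intersection of all maximal subgroups of $G$; that $\frat(G)$ is nilpotent; and that $\frat(G)$ consists of nongenerators, i.e. if $U\frat(G)=G$ for a subgroup $U\leq G$, then $U=G$.

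For the direction \emph{complemented implies non-Frattini} (which needs no abelianness and so settles one implication of (i) uniformly), suppose $U$ is a complement, so $UA=G$ and $U\cap A=1$; since $A\neq 1$, $U$ is proper. If we had $A\leq\frat(G)$, then $G=UA\leq U\frat(G)$, whence $U\frat(G)=G$ and so $U=G$ by the nongenerator property, a contradiction. Hence $A$ is non-Frattini.

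For the converse in (i), suppose $A$ is abelian and non-Frattini, so $A\not\leq\frat(G)$. Then there is a maximal subgroup $M$ of $G$ with $A\not\leq M$, and maximality of $M$ forces $AM=G$. The key point, and the only place where the abelian hypothesis enters, is that $A\cap M$ is normal in $G$: it is normalized by $M$ (from $A\trianglelefteq G$ one gets $A\cap M\trianglelefteq M$) and it is normalized by $A$ (since $A\cap M\leq A$ and $A$ is abelian), hence it is normalized by $AM=G$. Thus $A\cap M$ is a $G$-normal subgroup properly contained in the minimal normal subgroup $A$, so minimality gives $A\cap M=1$. Therefore $M$ is a complement to $A$, and $A$ is complemented.

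Finally, for (ii), suppose $A$ is non-abelian. A nonabelian minimal normal subgroup cannot be nilpotent: its center $\z(A)$ is characteristic in $A$, hence normal in $G$, and it is a proper subgroup of $A$ because $A$ is nonabelian, so minimality forces $\z(A)=1$, whereas a nontrivial nilpotent group has nontrivial center. Since $\frat(G)$ is nilpotent and every subgroup of a nilpotent group is nilpotent, a non-nilpotent $A$ cannot lie in $\frat(G)$; that is, $A$ is non-Frattini. I do not expect a genuine obstacle in this lemma: the whole statement follows directly from the structural properties of $\frat(G)$ listed above, the only subtle verification being the $G$-normality of $A\cap M$, which is precisely where the abelian hypothesis in (i) is needed.
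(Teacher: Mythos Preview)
Your proof is correct and follows essentially the same approach as the paper, which does not give a detailed proof but simply declares the lemma immediate from the nilpotency of $\frat(G)$ and the fact that the only subgroup supplementing $\frat(G)$ is $G$ itself. Your argument is a careful unpacking of precisely those two facts, together with the characterization of $\frat(G)$ as the intersection of all maximal subgroups to handle the remaining implication in (i).
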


\section{Monolithic group and crown-based power}

A finite group $L$ is called {monolithic} if $L$ has a unique minimal normal subgroup $A$. If in addition $A$ is not contained in $\frat(L)$, then $L$ is called a {monolithic primitive group}. 

Let $L$ be a monolithic primitive group and let $A$ be its unique minimal normal subgroup. For each positive integer $k$,
let $L^k$ be the $k$-fold direct product of $L$. The crown-based power of $L$ of size  $k$ is the subgroup $L_k$ of $L^k$ defined by
$$L_k=\{(l_1, \ldots , l_k) \in L^k  \text{ : } l_1 \equiv \cdots \equiv l_k \ {\mbox{mod}\text{ } } A \}.$$

Equivalently, $L_k=A^k \diag(L^k)$, where $$\diag(L^k):=\{(l,l,\hdots,l)\text{ : }l\in L\}\le L^k.$$  

We also define $L_0:=1$. Assume that $A=\soc L$ is non-abelian and let $C:=C_{\aut A}(L/A).$  Moreover assume that $L=\langle l_1,\dots,l_d\rangle A.$ Suppose $d\geq d(L)$ and let $\Omega_{l_1,\dots,l_d}$ be the set
of $d$-tuples $(\overline l_1,\dots,\overline l_d)$ in $L^d$ such that $\langle 
\overline l_1,\dots,\overline l_d\rangle=L$ and $\overline l_i\equiv l_i \mod A$ for $1\leq i\leq d.$ Then $C$ acts on $\Omega_{l_1,\dots,l_d}$ 
by setting $(x_1,\dots,x_d)^\gamma=(x_1^\gamma,\dots,x_d^\gamma)$ for every $(x_1,\dots,x_d) \in  \Omega_{l_1,\dots,l_d}$  and $\gamma\in C.$
The following holds
(see \cite[Section 2]{austr}).
\begin{lemma}\label{comegenero}
	Assume $a_{ij}\in A,$ with $1\leq i\leq d$ and $1\leq j\leq k$ and let  $$\begin{aligned}
	g_1=&l_1(a_{11},\dots,a_{1k}),\\
	&\dots \quad \dots\quad\dots \quad\\
	g_d=&l_d(a_{d1},\dots,a_{dk}).
	\end{aligned}
	$$
	Then $\langle g_1,\dots,g_d\rangle=L_k$ if and only if $\langle l_1,\dots,l_d\rangle=L$ and 
	the $d$-tuples
	$(l_1a_{11},\dots,l_da_{d1}),\dots,$ $(l_1a_{1k},\dots,l_da_{dk})$ belong to different orbits for the action of $C$ on  $\Omega_{l_1,\dots,l_d}.$
\end{lemma}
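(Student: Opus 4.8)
The plan is to prove the asserted equivalence; throughout I use that $\langle l_1,\dots,l_d\rangle=L$, which is part of the standing set-up (for $k\ge 2$ one may in any case reduce to this, since if $K:=\langle l_1,\dots,l_d\rangle\ne L$ then every $g_i$ lies in the proper subgroup $A^k\diag(K)$ of $L_k$, and both sides of the equivalence then fail). Write $T_j:=(l_1a_{1j},\dots,l_da_{dj})$ for $1\le j\le k$ and $H:=\langle g_1,\dots,g_d\rangle$. Since every $g_i$ has all of its coordinates congruent modulo $A$, automatically $H\le L_k$; moreover the $j$-th coordinate projection $\pi_j\colon L_k\to L$ sends $g_i$ to $l_ia_{ij}$, so $\pi_j(H)=\langle l_1a_{1j},\dots,l_da_{dj}\rangle$, and $H=L_k$ forces this to be $L$, i.e.\ $T_j\in\Omega_{l_1,\dots,l_d}$ for every $j$. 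Hence it suffices to show that, once all the $T_j$ lie in $\Omega_{l_1,\dots,l_d}$, one has $H=L_k$ if and only if $T_1,\dots,T_k$ lie in pairwise distinct orbits under $C$.

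Necessity of the orbit condition I would prove contrapositively. Suppose $T_b=T_a^{\gamma}$ for some $a\ne b$ and $\gamma\in C=C_{\aut A}(L/A)$. Because $\gamma$ centralises the image of $L$ in $\out A$, we have $[\gamma,x]\in\inn A$ for every $x\in L$, so $\gamma$ normalises $L$ inside $\aut A$ and $x^{\gamma}\in L$ for $x\in L$; consequently $D:=\{(x_1,\dots,x_k)\in L_k : x_b=x_a^{\gamma}\}$ is a proper subgroup of $L_k$. The equation $T_b=T_a^{\gamma}$ says precisely that $l_ia_{ib}=(l_ia_{ia})^{\gamma}$ for all $i$, i.e.\ $g_i\in D$, whence $H\le D<L_k$ and $H\ne L_k$.

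For sufficiency I would induct on $k$, the case $k=1$ being trivial. Let $k\ge 2$ and let $\rho\colon L_k\to L_{k-1}$ be the projection deleting the last coordinate; then $\ker\rho=A^{(k)}:=1^{k-1}\times A$, and this is a minimal normal subgroup of $L_k$, since $L_k$ acts on it through $\pi_k(L_k)=L$ and $A=\soc L$ is a minimal normal subgroup of $L$. The tuples $T_1,\dots,T_{k-1}$ are in $\Omega_{l_1,\dots,l_d}$ and in distinct $C$-orbits, so by the inductive hypothesis $\rho(H)=L_{k-1}$, i.e.\ $HA^{(k)}=L_k$. Now $H\cap A^{(k)}$ is a normal subgroup of $H$ that is stable under $\pi_k(H)=L$, so by minimality $H\cap A^{(k)}\in\{1,A^{(k)}\}$; if $H\cap A^{(k)}=A^{(k)}$ then $H\supseteq A^{(k)}$ together with $HA^{(k)}=L_k$ gives $H=L_k$, and we are done. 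It remains to rule out the case $H\cap A^{(k)}=1$.

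This is where the real work lies, and I expect it to be the main obstacle. If $H\cap A^{(k)}=1$ then $\rho|_H$ is an isomorphism and $H$ is the graph of a homomorphism $\phi\colon L_{k-1}\to L$ with $\phi(\rho(g_i))=l_ia_{ik}$ and, since $(y,\phi(y))$ must lie in $L_k$, with $\phi$ congruent modulo $A$ to every coordinate projection $\pi'_j\colon L_{k-1}\to L$. I would then invoke two structural facts: $\soc(L_{k-1})=A^{(1)}\times\cdots\times A^{(k-1)}$, and the only normal subgroups of $L_{k-1}$ contained in this socle are the subproducts $\bigoplus_{j\in S}A^{(j)}$ (each $A^{(j)}$ being perfect and normal in $L_{k-1}$, any normal subgroup of $L_{k-1}$ that projects onto an $A^{(j)}$ must contain it). Since $\phi$ is surjective, $N:=\ker\phi$ has $L_{k-1}/N\cong L$; as $\phi(\soc(L_{k-1}))$ is a nontrivial normal subgroup of $L$ inside $A$, it equals $A$, so $N\cap\soc(L_{k-1})$ has index $|A|$ in $\soc(L_{k-1})$ and hence equals $\bigoplus_{j\ne j_0}A^{(j)}$ for a unique $j_0<k$; but $\bigoplus_{j\ne j_0}A^{(j)}$ is exactly $\ker\pi'_{j_0}$ (in $L_{k-1}$ the condition $y_{j_0}=1$ forces $y\in A^{k-1}$), so $\ker\pi'_{j_0}\le N$, and from $L_{k-1}/\ker\pi'_{j_0}\cong L\cong L_{k-1}/N$ we obtain $N=\ker\pi'_{j_0}$. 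Therefore $\phi=\sigma\circ\pi'_{j_0}$ for some $\sigma\in\aut L$, and $\phi\equiv\pi'_{j_0}\bmod A$ makes $\sigma$ trivial on $L/A$; since $Z(A)=1$, a short computation identifies such a $\sigma$ with the automorphism of $L$ induced by conjugation by $\gamma:=\sigma|_A$, and one checks $\gamma\in C$. Evaluating $\phi$ on $\rho(g_i)$ now yields $l_ia_{ik}=(l_ia_{ij_0})^{\gamma}$ for all $i$, i.e.\ $T_k=T_{j_0}^{\gamma}$, contradicting the hypothesis that the $T_j$ lie in distinct $C$-orbits. This contradiction closes the induction and completes the proof.
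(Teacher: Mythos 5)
Since the paper itself gives no proof of Lemma \ref{comegenero} (it is quoted from \cite[Section 2]{austr}), there is no internal argument to compare with; what you wrote is essentially the standard proof, and its core is correct. The necessity of the orbit condition via the proper ``twisted graph'' subgroup $D=\{(x_1,\dots,x_k)\in L_k:\ x_b=x_a^{\gamma}\}$ is fine (properness follows from $1^{k-1}\times A\not\le D$, and $\gamma\in C$ does normalise $L$ inside $\aut A$). The sufficiency induction is also sound: $1^{k-1}\times A$ is a minimal normal subgroup of $L_k$ because $\pi_k(L_k)=L$ and $A$ is minimal normal in $L$; the case $H\cap(1^{k-1}\times A)=1$ correctly produces a surjection $\phi:L_{k-1}\to L$ congruent mod $A$ to every coordinate projection; $\soc(L_{k-1})=A^{k-1}$ and the classification of the normal subgroups of $L_{k-1}$ inside $A^{k-1}$ as subproducts (using that $A$ is perfect and $C_L(A)=1$) are correct; and the identification of an automorphism $\sigma$ of $L$ that is trivial modulo $A$ with (conjugation by) an element $\gamma=\sigma|_A\in C$ uses $Z(A)=1$ exactly as intended, yielding $T_k$ and $T_{j_0}$ in the same $C$-orbit, as required. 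Up to the harmless ambiguity $\gamma$ versus $\gamma^{-1}$, this closes the argument.

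The genuine flaw is your opening reduction. You claim that if $K:=\langle l_1,\dots,l_d\rangle\neq L$ then $A^k\diag(K^k)$ is a proper subgroup of $L_k$, so that ``both sides of the equivalence fail''. But $|A^k\diag(K^k)|=|A|^k|K|/|K\cap A|$, so this subgroup is proper in $L_k$ if and only if $KA<L$; under the paper's standing hypothesis $\langle l_1,\dots,l_d\rangle A=L$ it is \emph{never} proper. In fact the left-hand side can hold with $K\neq L$: take $L=S_5$, $A=A_5$, $l_1=l_2=(12)$ and choose the $a_{ij}\in A_5$ so that the two columns $T_1,T_2$ are generating pairs of odd permutations lying in different simultaneous-conjugacy classes; then $\langle g_1,g_2\rangle=L_2$ while $\langle l_1,l_2\rangle\neq L$. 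This also shows that the condition ``$\langle l_1,\dots,l_d\rangle=L$'' in the statement depends on the choice of the coset representatives $l_i$ (the $g_i$ determine the $l_i$ only modulo $A$) and is not literally forced by generation of $L_k$; it must be read up to that choice. The correct repair is not your subgroup argument but a renormalisation of representatives: replace $l_i$ by $l_ia_{i1}$ and $a_{ij}$ by $a_{i1}^{-1}a_{ij}$; this leaves the $g_i$, the set $\Omega_{l_1,\dots,l_d}$ and the tuples $T_j$ unchanged (they depend only on the cosets $l_iA$), and makes $\langle l_1,\dots,l_d\rangle=\langle T_1\rangle$, which equals $L$ precisely when $T_1\in\Omega_{l_1,\dots,l_d}$. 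With this normalisation the lemma becomes exactly the equivalence you do prove (all $T_j\in\Omega_{l_1,\dots,l_d}$ and pairwise distinct $C$-orbits if and only if $\langle g_1,\dots,g_d\rangle=L_k$), which is also the form needed in Proposition \ref{smallt}; so the body of your proof stands once the first paragraph is replaced by this observation.
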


Before to state the next result, we need to recall another important observation.  Let $H$ be a finite group, $X$  a subset of $H$, $M$ a normal subgroup of $H$ and assume that $h_1,\ldots,h_k\in H$ and $X$ generate $H$ modulo $M$, that is $H=\langle h_1,\ldots,h_k, X,M\rangle$.
It follows from \cite[Proposition 16]{Lxdir} that the number $\Phi_{H,M}(X,k)$ of elements $(u_1,\ldots, u_k)\in M^k$ with the property that $H=\langle h_1u_1,\ldots, h_ku_k,X\rangle$ is independent of the choice of $h_1,\ldots,h_k$.

\begin{lemma}\label{counting}
Let $L$ be a monolithic primitive group, and assume that $A=\soc(L)$ is non-abelian. Moreover, suppose that $d\geq d(L)$ and $L=\langle l_1,\dots,l_d\rangle A,$ and let $t$ be a positive integer with $2\leq t\leq d.$ Let $\Delta$ be the set of the $t$-tuples $(a_1,\dots,a_t)\in A^t$ with the property that $L=\langle l_1a_1,\dots,l_ta_t,l_{t+1},\dots,l_d\rangle.$ Then $$|\Delta|\geq\frac{53|A|^t}{90}.$$ 
\end{lemma}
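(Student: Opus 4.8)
The plan is to estimate $|\Delta|$ by a counting/inclusion-exclusion argument over the "failure events" at each of the $k$-coordinates of a crown-based power—or more directly, by relating $|\Delta|$ to the quantities $\Phi_{L,A}(X,k)$ and to the orbit counting in Lemma~\ref{comegenero}. Concretely, write $\mathbb{P}$ for the probability, over a uniformly random $(a_1,\dots,a_t)\in A^t$, that $L=\langle l_1a_1,\dots,l_ta_t,l_{t+1},\dots,l_d\rangle$; we must show $\mathbb{P}\ge 53/90$. Since $L=\langle l_1,\dots,l_d\rangle A$, the tuple $(l_1a_1,\dots,l_ta_t,l_{t+1},\dots,l_d)$ always generates $L$ modulo $A$, so it generates $L$ unless its image misses $A$; the complement event is that $\langle l_1a_1,\dots,l_ta_t,l_{t+1},\dots,l_d\rangle\cap A$ is a proper $L$-submodule, i.e.\ (since $A$ is a minimal normal subgroup) is trivial, meaning the subgroup is a complement to $A$ in $L$. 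So I must bound the number of $t$-tuples $(a_1,\dots,a_t)$ for which $\langle l_1a_1,\dots,l_ta_t,l_{t+1},\dots,l_d\rangle$ is a complement to $A$.

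The first step is to handle the case $t=d$ (or rather to reduce to it): by the observation following Lemma~\ref{comegenero}, the count $\Phi_{L,A}(X,t)$ of tuples $(a_1,\dots,a_t)\in A^t$ with $L=\langle l_1a_1,\dots,l_ta_t,X\rangle$, where $X=\{l_{t+1},\dots,l_d\}$, is independent of the coset representatives $l_1,\dots,l_t$; this lets me average and compare across different choices, and in particular compare the $t$-generator situation with the $(t-1)$-generator situation. The key structural input is Lemma~\ref{comegenero} applied with $k=2$: a pair of tuples in $\Omega_{l_1,\dots,l_d}$ generates $L_2$ iff the two tuples lie in different $C$-orbits, where $C=C_{\aut A}(L/A)$. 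Summing over all pairs, $|\Omega|^2$ minus the number of same-orbit pairs equals the number of generating pairs of $L_2$; but the number of generating pairs of $L_2$ can also be computed from $\Phi_{L,A}$-type quantities. This gives an identity that, after rearranging, bounds the orbit sizes and hence the "defect" $|A|^t-|\Delta|$ from above. The constant $53/90$ should emerge as the worst case over the finitely many "small" configurations—essentially when $|A|$ and the number of relevant orbits are as small as the classification of simple groups allows (the extreme case being $A$ a minimal simple group of small order with $C$ trivial, e.g.\ $A=\mathrm{PSL}(2,q)$ for small $q$, or $A^2$-type socles), so one computes $1-|\Delta|/|A|^t \le 37/90$ in that case.

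The main obstacle I anticipate is pinning down the extremal constant: one needs a uniform lower bound, valid for \emph{all} monolithic primitive $L$ with non-abelian socle and all valid $d,t,l_i$, and the bound $53/90$ is clearly tuned to a specific worst case. Extracting it will require (i) a clean general inequality of the form $|\Delta|/|A|^t \ge 1 - (\text{number of complements containing a fixed modular lift})/|A|^{t-1}\cdot(\text{something})$, reducing the problem to bounding how many complements arise, and (ii) invoking CFSG-based results (in the spirit of the results on generation of groups with a unique minimal normal subgroup cited in the introduction) to show that for the finitely many small socles the bound holds by direct check, while for all larger socles the probability of hitting a complement is so small—of order $O(1/|A|)$ or better—that $\mathbb{P}$ is much closer to $1$ than $53/90$. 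A secondary technical point is the case $t<d$: the extra "free" generators $l_{t+1},\dots,l_d$ can only help (they enlarge the subgroup), so monotonicity reduces everything to understanding the dependence on $t$, but one must be careful that the $C$-action and the independence-of-representatives statement are applied to the correct ambient group; I would isolate this as a short lemma before the main estimate.
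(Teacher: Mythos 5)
Your reduction of $|\Delta|$ to the quantity $\Phi_{L,A}(X,t)$ with $X=\{l_{t+1},\dots,l_d\}$ is exactly the first step of the paper's proof. But from that point the paper does essentially nothing more: it notes (via \cite[Lemma 2]{index}) that $X$ can be completed to a generating set of $L$ by $t$ further elements, so that $\Phi_{L,A}(X,t)$ is the count in question, and then quotes \cite[Theorem 2.2]{london} verbatim for the bound $\Phi_{L,A}(X,t)\geq 53|A|^t/90$. The constant $\frac{53}{90}$ is not derived in this paper at all; it is imported wholesale from that CFSG-dependent theorem of Detomi and Lucchini. What you propose instead is, in effect, a plan to re-prove that theorem, and as written the plan is both incomplete (you acknowledge you cannot yet extract the extremal constant) and not a proof.

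Moreover, your plan contains a concrete error. You claim that if $H=\langle l_1a_1,\dots,l_ta_t,l_{t+1},\dots,l_d\rangle$ satisfies $HA=L$ but $H\neq L$, then $H\cap A$ is an $L$-invariant proper subgroup of $A$, hence trivial by minimality, so that $H$ is a complement to $A$. This inference is valid only when $A$ is abelian: there $A$ centralises $H\cap A$ and $H$ normalises it, so $L=HA$ normalises it. Here $A$ is non-abelian, $A$ need not normalise $H\cap A$, and indeed proper supplements of a non-abelian minimal normal subgroup $A\cong S^n$ typically meet $A$ in nontrivial subgroups (products of point stabilisers, diagonal subgroups, etc.). So the failure event is not ``$H$ is a complement,'' and the whole counting scheme built on enumerating complements does not get off the ground. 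The correct route is the one the paper takes: identify $|\Delta|$ with $\Phi_{L,A}(X,t)$ (which you did) and then invoke the cited probabilistic generation theorem, whose proof is where the maximal-subgroup analysis and the worst-case constant actually live.
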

\begin{proof}
Set $X=\{l_{t+1},\dots,l_d\}.$
It follows from the proof of \cite[Lemma 2]{index}, that there exist
$y_1,\dots,y_t\in L$ such that $L=\langle y_1,\dots,y_t,X\rangle.$ Hence $|\Delta|=\phi_{L,A}(X,t).$ 
To conclude it suffices to notice that, by \cite[Theorem 2.2]{london},  $\phi_{L,A}(X,t)\geq 53|A|^t/90.$
\end{proof}

\begin{prop}\label{smallt}Let $L$ be a monolithic primitive group, and assume that $A=\soc(L)$ is non-abelian. Consider the crown-based  product $G=L_\delta$ and let $N\cong A$ be a minimal normal subgroup of $G.$ Suppose that $G=\langle g_1,\dots,g_d\rangle N$, with $d\geq 2.$ Let $t=\lceil\frac{8}{5}+\log_{|N|}\delta\rceil$. If $t\leq d,$ then  there exist $n_1,\dots,n_t\in N$ such that $G=\langle g_1n_1,\dots,g_tn_t,g_{t+1},\dots,g_d\rangle.$
\end{prop}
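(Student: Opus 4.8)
The plan is to realize $G=L_\delta$ as $A^\delta\diag(L^\delta)$ and to reduce everything to the generation criterion of Lemma~\ref{comegenero}. Since $L_\delta$ permutes the $\delta$ coordinate copies of $A$ inside its socle $A^\delta$ without mixing them, these copies are exactly the minimal normal subgroups of $G$; as $N\cong A$ is one of them, I may reindex the factors so that $N=A\times 1\times\cdots\times 1$, and then $G/N\cong L_{\delta-1}$ via dropping the first coordinate. Write $g_i=(g_{i,1},\dots,g_{i,\delta})$ with the coordinates pairwise congruent modulo $A$, put $\pi_j=(g_{1,j},\dots,g_{d,j})\in L^d$, and let $\Omega=\Omega_{g_{1,1},\dots,g_{d,1}}$; this depends only on the cosets $g_{i,1}A$, hence equals $\Omega_{g_{1,j},\dots,g_{d,j}}$ for every $j$. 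Applying Lemma~\ref{comegenero} to $G/N\cong L_{\delta-1}$ (realised on the coordinates $2,\dots,\delta$), the hypothesis $G=\langle g_1,\dots,g_d\rangle N$ becomes: $\langle g_{1,2},\dots,g_{d,2}\rangle=L$ and the tuples $\pi_2,\dots,\pi_\delta$ lie in $\Omega$ in pairwise distinct $C$-orbits. In particular $\Omega\neq\emptyset$, $d(L)\le d$, and $L=\langle g_{1,1},\dots,g_{d,1}\rangle A$.

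For $b_1,\dots,b_t\in A$, right-multiplying $g_i$ (for $i\le t$) by the element of $N$ corresponding to $b_i$ replaces $g_{i,1}$ by $g_{i,1}b_i$ and leaves the other coordinates untouched, so $\Omega$ and the tuples $\pi_j$ with $j\ge 2$ are unchanged. Hence, by Lemma~\ref{comegenero} applied to $L_\delta$ and parametrised via the first coordinate, the perturbed elements $g_1n_1,\dots,g_tn_t,g_{t+1},\dots,g_d$ generate $G$ exactly when $v(b):=(g_{1,1}b_1,\dots,g_{t,1}b_t,g_{t+1,1},\dots,g_{d,1})$ lies in $\Omega$ and its $C$-orbit differs from those of $\pi_2,\dots,\pi_\delta$ (recall these already lie in pairwise distinct $C$-orbits, so this is the only new requirement). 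As $b\mapsto v(b)$ is injective, it therefore suffices to prove
$$\#\{\,b\in A^t : v(b)\in\Omega\,\}\ >\ \sum_{j=2}^{\delta}\bigl|C\cdot\pi_j\bigr|.$$
Here the left-hand side is at least $53|A|^t/90$ by Lemma~\ref{counting} (all of whose hypotheses hold: $L=\langle g_{1,1},\dots,g_{d,1}\rangle A$, $d\ge d(L)$, and $2\le t\le d$ as $t=\lceil\tfrac{8}{5}+\log_{|N|}\delta\rceil\ge 2$), while the right-hand side is at most $(\delta-1)|C|$, since every $C$-orbit has size at most $|C|$.

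It remains to bound $|C|$, and this is the heart of the matter. One has $\inn A\le C$ with $C/\inn A\cong C_{\out A}(L/A)$, so $|C|=|A|\cdot|C_{\out A}(L/A)|$. Write $A=S^n$ with $S$ non-abelian simple; then $\out A\cong\out(S)\wr\sym(n)$, and since $A=\soc L$ is a minimal normal subgroup of $L$ the group $L/A$ acts transitively on the $n$ simple factors. A short computation with centralizers in wreath products --- the image in $\sym(n)$ of an element centralizing $L/A$ lies in the centralizer of a transitive group, hence in a semiregular subgroup of order at most $n$, while the $\out(S)^n$-component is forced along an $L/A$-orbit --- then gives $|C_{\out A}(L/A)|\le n\,|\out(S)|$. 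A case-by-case inspection over the finite simple groups (here the classification enters) shows $n\,|\out(S)|<\tfrac{53}{90}|S|^{3n/5}=\tfrac{53}{90}|A|^{3/5}$, so $|C|<\tfrac{53}{90}|A|^{8/5}$. Finally, $t=\lceil\tfrac{8}{5}+\log_{|N|}\delta\rceil\ge\tfrac{8}{5}+\log_{|A|}\delta$ gives $|A|^t\ge\delta\,|A|^{8/5}$, so
$$\frac{53|A|^t}{90}\ \ge\ \delta\cdot\frac{53}{90}|A|^{8/5}\ >\ \delta|C|\ \ge\ (\delta-1)|C|,$$
which is the required inequality. The main obstacle is precisely this estimate on $|C|$: bringing the exponent down to $8/5$ --- small enough to be absorbed by the $\log_{|N|}\delta$ term in $t$ --- is what forces both the wreath-product centralizer bound and the arithmetic check on the outer automorphism groups of the simple groups.
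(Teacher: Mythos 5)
Your strategy is essentially the paper's: identify $N$ with one coordinate copy of $A$ inside $\soc(L_\delta)=A^\delta$, use Lemma \ref{comegenero} to translate the problem into producing one column tuple in $\Omega$ whose $C$-orbit avoids the $\delta-1$ orbits coming from the other coordinates, lower-bound the admissible columns via Lemma \ref{counting}, and beat the forbidden set using $|C|\le n|A|\,|\out(S)|$. Your two departures are harmless or even pleasant for $\delta\ge 2$: you compare the number of good columns with the total \emph{size} of the forbidden orbits (each at most $|C|$) instead of invoking semiregularity of the $C$-action to count orbits, and you extract $d(L)\le d$ directly from the existence of $\pi_2\in\Omega$ rather than from the structure theorem for monolithic groups.

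There is, however, a genuine gap at $\delta=1$, a value the statement allows and which Lemma \ref{mint} does use. In that case $G=L$, $N=A$ and $G/N=L/A$, which is \emph{not} $L_{\delta-1}=L_0=1$ (the kernel of ``dropping the first coordinate'' is all of $L$, not $A$), and there is no $\pi_2$; so your derivation of $\Omega\neq\emptyset$, $d(L)\le d$ and $L=\langle g_{1,1},\dots,g_{d,1}\rangle A$ collapses. The inequality $d\ge d(L)$ is an explicit hypothesis of Lemma \ref{counting}, and for $\delta=1$ the proposition's hypothesis only gives $d(L/A)\le d$; bridging this is exactly the Lucchini--Menegazzo theorem \cite{unico}, $d(L)=\max\{2,d(L/A)\}$ for monolithic $L$ with non-abelian socle, which is how the paper opens its proof and which you never invoke. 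It cannot be recovered from inside your counting argument, since the very conclusion you want for $\delta=1$ already implies $d(L)\le d$. A secondary, smaller point: the inequality $n|\out(S)|<\tfrac{53}{90}|S|^{3n/5}$ is asserted via an unspecified ``case-by-case inspection over the finite simple groups''; it is true, but the clean route (and the paper's) is Kohl's bound $|\out(S)|\le\log_2|S|$ together with $|S|\ge 60$, which reduces it to a one-line estimate rather than a survey of the classification.
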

\begin{proof}
Notice that $L/A\cong G/\soc(G),$ hence
$$d(L/A)=d(G/\soc(G))\leq d(G/N)\leq d.$$ It follows from the main theorem in \cite {unico}, that $$d(L)=\max\{2,d(L/N)\}\leq d.$$
	The elements of $G$ are of the form $l(y_1,\dots,y_\delta)$ with $l\in L$ and $y_1,\dots,y_\delta\in A.$ We may identify $N$ with the subgroup
	$\{(1,\dots,1,y)\mid y \in A\}$ of $A^\delta=\soc(G).$ Moreover, for $1\leq i\leq d$ and $1\leq j\leq \delta,$ there exist $l_i \in L$ and $y_{ij}$ in $A$ such that
	$$\begin{aligned}
	g_1=&l_1(y_{1,1},\dots,y_{1,\delta}),\\
	&\dots \quad \dots\quad\dots \quad\\
	g_d=&l_d(y_{d,1},\dots,y_{d,\delta}).
	\end{aligned}
	$$
	Let $\Gamma=C_{\aut A}(L/A)$ and, for $1\leq i\leq \delta-1,$ let
	$$\omega_i=(l_1y_{1,i},\dots,l_dy_{d,i})\in L^d.$$ Moreover let $\Omega$ be the set
	of  $(\overline l_1,\dots,\overline l_d)$ in $L^d$ such that $\langle 
	\overline l_1,\dots,\overline l_d\rangle=L$ and $\overline l_i\equiv l_i \mod A$ for $1\leq i\leq d.$ The condition $G=\langle g_1,\dots,g_d\rangle N$ implies that $\omega_i\in \Omega$ for $1\leq i\leq \delta-1$ and if $i\neq j$ then $\omega_i$ and $\omega_j$ belong to different $\Gamma$-orbits for the action of $\Gamma$ in $\Omega.$
	
	Our statement is equivalent to show that there exist $x_1,\dots,x_t \in A$ such that $\omega=(l_1 y_{1,\delta} x_1,\dots,l_t y_{t,\delta} x_t,l_{t+1}y_{t+1,\delta},\dots,l_{d}y_{d,\delta})\in \Omega$ and, for each $1\leq i\leq \delta-1$,  $\omega$ and $\omega_i$ belong to different $\Gamma$-orbits. Indeed in that case, taking, for $1\leq i\leq t,$ $n_i=(1,\dots,1,x_i)\in N,$ by Lemma \ref{comegenero} we conclude $G=\langle g_1n_1,\dots,g_tn_t,g_{t+1},\dots,g_d\rangle.$

Let $\Delta$ be the subset of $\Omega$ consisting of the $d$-tuples  $(\overline l_1,\dots,\overline l_d)$ with $\overline l_i =l_iy_{i\delta}$ for each $i>t.$ Assume $A\cong S^n,$ with $n\in \mathbb N$ and $S$ a finite non-abelian simple group. By Lemma \ref{counting},
$$|\Delta|\geq  \frac{53|A|^{t}}{90}.$$
Notice that the action of $\Gamma$ on $\Omega$ is semiregular. Since $$|\Gamma| \leq n|S|^{n-1}|\aut(S)|\leq n|A||\out(S)|\leq n|A|\log_2|S|
\leq |A|\log_2|A|$$ (see the proof of \cite[Lemma 1]{pr} and the upper bound for $|\out(S)|$ given in \cite{ku}), the number of
$\Gamma$-orbits with a representative in $\Delta$ is at least
$$\frac{|\Delta|}{|\Gamma|}\geq \frac{53}{90}\frac{|A|^{t-1}}{\log_2|A|}\geq |A|^{t-\frac{8}{5}}.$$ Since  $|A|^{t-8/5}=|N|^{t-8/5}\geq \delta$, we can conclude that $\Delta$ contains an element with the required property.
\end{proof}

\section{Crowns}\label{crowns}

The notion of crown was
introduced by Gasch\"{u}tz  in \cite{Gaschutz} in the case of finite solvable groups and generalized
in \cite{JL} to arbitrary finite groups.
A detailed exposition of the theory is also given in \cite[1.3]{classes}.

If a group $G$ acts on a group $A$ via automorphisms (that is, if there exists a homomorphism $G\rightarrow \aut(A)$), then we say that $A$ is a $G$-group. If $G$ does not stabilise any nontrivial proper subgroup of $A$, then $A$ is called an irreducible $G$-group. Two $G$-groups $A$ and $B$ are said to be $G$-isomorphic, or $A\cong_G B$, if there exists a group isomorphism $\phi: A\rightarrow B$ such that 
$\phi(g(a))=g(\phi(a))$ for all $a\in A, g\in G$.  Following  \cite{JL}, we say that two  $G$-groups $A$ and $B$  are $G$-equivalent and we put $A \equiv_G B$, if there are isomorphisms $\phi: A\rightarrow B$ and $\Phi: A\rtimes G \rightarrow B \rtimes G$ such that the following diagram commutes:

\begin{equation*}
	\begin{CD}
		1@>>>A@>>>A\rtimes G@>>>G@>>>1\\
		@. @VV{\phi}V @VV{\Phi}V @|\\
		1@>>>B@>>>B\rtimes G@>>>G@>>>1.
	\end{CD}
\end{equation*}

\

Note that two  $G$\nobreakdash-isomorphic $G$\nobreakdash-groups are $G$\nobreakdash-equivalent. In the particular case where $A$ and $B$ are abelian the converse is true: if $A$ and $B$ are abelian and $G$\nobreakdash-equivalent, then $A$ and $B$ are also $G$\nobreakdash-isomorphic.
It is proved (see for example \cite[Proposition 1.4]{JL}) that two  chief factors $A$ and $B$ of $G$ are $G$-equivalent if and only if either they are  $G$-isomorphic, or there exists a maximal subgroup $M$ of $G$ such that $G/\core_G(M)$ has two minimal normal subgroups $X$ and $Y$ that are
$G$-isomorphic to $A$ and $B$ respectively. For example, the minimal normal subgroups of a crown-based power $L_k$ are all $L_k$-equivalent.

For an irreducible $G$-group $A$ denote by $L_A$ the monolithic primitive group associated to $A$.
That is
$$L_{A}=
\begin{cases}
	A\rtimes (G/C_G(A)) & \text{ if $A$ is abelian}, \\
	G/C_G(A)& \text{ otherwise}.
\end{cases}
$$
If $A$ is a non-Frattini chief factor of $G$, then $L_A$ is a homomorphic image of $G$. More precisely, there exists a normal subgroup $N$ of $G$ such that $G/N \cong L_A$ and $\soc(G/N)\equiv_G A$. Consider now all the normal subgroups $N$ of $G$ with the property that $G/N \cong L_A$ and $\soc(G/N)\equiv_G A$:
the intersection $R_G(A)$ of all these subgroups has the property that  $G/R_G(A)$ is isomorphic to the crown-based power $(L_A)_{\delta_G(A)}$. The socle $I_G(A)/R_G(A)$ of $G/R_G(A)$ is called the {$A$-crown} of $G$ and it is  a direct product of $\delta_G(A)$ minimal normal subgroups $G$-equivalent to $A$.

In our proof we will use the following consequence of \cite[Proposition 1.1]{crown}.

\begin{lemma}\label{corone}
	Let $N$ be  a minimal normal subgroup of a finite group $G.$ If $HN=HR_G(N)=G,$ then $H=G.$
\end{lemma}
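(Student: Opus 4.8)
The statement to prove is: if $N$ is a minimal normal subgroup of a finite group $G$, $H\le G$, and $HN=HR_G(N)=G$, then $H=G$. The plan is to reduce to the structure of $G/R_G(N)$, which by the theory recalled just above is the crown-based power $(L_N)_{\delta}$ where $L=L_N$ is the monolithic primitive group associated to $N$ and $\delta=\delta_G(N)$. I would first pass to the quotient $\bar G=G/R_G(N)$, writing $\bar H$ for the image of $H$; the hypothesis $HR_G(N)=G$ says exactly $\bar H=\bar G$, so $H R_G(N)=G$. Combined with $HN=G$, I want to show $H$ meets every chief factor between $R_G(N)$ and $G$ as well, and ultimately that $H=G$; but the cleaner route is to invoke \cite[Proposition 1.1]{crown} directly, of which this lemma is stated to be a consequence.

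The key observation is that $\bar N=NR_G(N)/R_G(N)$ is one of the $\delta$ minimal normal subgroups of the socle $I_G(N)/R_G(N)$ of the crown-based power $(L_N)_\delta$, all of which are $G$-equivalent to $N$. The crucial point about crown-based powers is the following: if $\bar H$ is a proper subgroup of $(L_N)_\delta$ supplementing one of the minimal normal subgroups $\bar N$ of its socle, then $\bar H$ actually cannot be all of $(L_N)_\delta$ — more precisely, $\bar H (I_G(N)/R_G(N)) \ne (L_N)_\delta$ unless $\bar H = (L_N)_\delta$. This is the content of \cite[Proposition 1.1]{crown}: a proper supplement to a single minimal normal subgroup in the crown cannot supplement the whole crown. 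So first I would use $HN=G$ to deduce that $\bar H$ supplements $\bar N$ in $\bar G = (L_N)_\delta$, and then apply the cited proposition to conclude $\bar H = \bar G$ forces... no, rather: the proposition says a proper subgroup supplementing $\bar N$ does not supplement $I_G(N)/R_G(N)$. Hence if $\bar H$ supplements $\bar N$ and $\bar H = \bar G$ (from $HR_G(N)=G$), we get a contradiction with properness unless $\bar H=\bar G$ genuinely, i.e. the only supplement is the whole group. So $H R_G(N)=G$ together with the supplement condition forces $H=G$ after we also rule out the Frattini obstruction: $R_G(N)$ itself need not be contained in $\frat(G)$, so I must argue separately that once $\bar H=\bar G$ we can lift back.

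For the lift-back step, I would proceed by a standard induction on a chief series of $R_G(N)$ refining $1 \le \cdots \le R_G(N)$: if $K$ is a maximal normal subgroup of $G$ inside $R_G(N)$ and $HK=G$ by induction, I want $H=G$; since $R_G(N)/K$ is a chief factor and $R_G(N)$ is, by construction of the crown, contained in the relevant Frattini-type obstruction — more precisely every chief factor of $G$ below $R_G(N)$ that is $G$-equivalent to $N$ is "used up", so any such factor is either Frattini or already complemented inside $R_G(N)$ in a way incompatible with $H$ being proper. The honest way is just to cite \cite[Proposition 1.1]{crown}, which is designed precisely so that this lemma is an immediate corollary; I would therefore present the argument as: apply the cited proposition with the subgroup $H$, the crown $I_G(N)/R_G(N)$, and the minimal normal subgroup $N$, noting $HN=G$ gives the hypothesis of the proposition and $HR_G(N)=G$ gives that $H$ is not contained in any member of the relevant family, whence $H=G$.

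The main obstacle I anticipate is getting the exact form of \cite[Proposition 1.1]{crown} to match: that proposition is typically phrased in terms of $G = HR_G(N)$ implying $H$ supplements every minimal normal subgroup $G$-equivalent to $N$ and then, using $HN = G$ for one such $N$, concluding $H$ must be everything — so the whole content is really bookkeeping about which supplement hypotheses are needed. I would make sure to state it as: the hypothesis $HN=G$ is the single "extra" supplement beyond $HR_G(N)=G$ that \cite[Proposition 1.1]{crown} requires, and conclude $H=G$. No delicate estimates are involved; the only care needed is the correct invocation of the crown machinery.
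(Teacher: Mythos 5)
Your proposal is correct and takes essentially the same route as the paper, which offers no proof beyond citing \cite[Proposition 1.1]{crown}: since $N\le I_G(N)$, the hypothesis $HN=G$ gives $HI_G(N)=G$, and that proposition says precisely that a proper subgroup $H$ supplementing the crown $I_G(N)/R_G(N)$ cannot also satisfy $HR_G(N)=G$, whence $H=G$. The detours in your middle paragraphs (worrying about a ``Frattini obstruction'' and a ``lift-back'' induction along a chief series of $R_G(N)$) are unnecessary, because the cited proposition is already formulated for subgroups of $G$ itself rather than of $G/R_G(N)$.
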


The next result is an immediate consequence of \cite[Lemma 10]{crown2}.
\begin{lemma}\label{corone2}
	If $N$ is a non-Frattini minimal normal subgroup of a finite group $G$, then 
	$NR_G(N)/R_G(N)$ is $G$-equivalent to $N.$
	\end{lemma}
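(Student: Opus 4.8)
The plan is to reduce the claim to the single assertion $N\not\leq R_G(N)$ and then to exhibit an explicit $G$-isomorphism. First I would observe that $N\cap R_G(N)$ is a normal subgroup of $G$ contained in the minimal normal subgroup $N$, hence equals $1$ or $N$. If $N\cap R_G(N)=1$, then the canonical homomorphism $N\to NR_G(N)/R_G(N)$ sending $n$ to $nR_G(N)$ is a bijection, hence a group isomorphism; since both sides carry the $G$-action induced by conjugation in $G$, this isomorphism is $G$-equivariant, so $N$ and $NR_G(N)/R_G(N)$ are $G$-isomorphic, and therefore $G$-equivalent (as noted in the excerpt, $G$-isomorphic $G$-groups are $G$-equivalent). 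Thus everything comes down to excluding the case $N\leq R_G(N)$.

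This is the one place where the hypothesis is used, and I would handle it via Lemma \ref{corone}. Suppose $N\leq R_G(N)$. Since $N$ is a non-Frattini minimal normal subgroup of $G$ we have $N\not\leq\frat(G)$, so there is a maximal subgroup $M$ of $G$ with $N\not\leq M$; by maximality $MN=G$. But then $N\leq R_G(N)$ gives $G=MN\leq MR_G(N)\leq G$, so $MN=MR_G(N)=G$ while $M$ is a proper subgroup of $G$, contradicting Lemma \ref{corone}. Hence $N\not\leq R_G(N)$, so $N\cap R_G(N)=1$, and the statement follows from the first paragraph.

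I do not expect a genuine obstacle: the substance is packaged entirely in Lemma \ref{corone}, which records the ``relative Frattini'' behaviour of $R_G(N)$, and what is left is formal bookkeeping — checking that the image of a minimal normal subgroup of $G$ modulo a normal subgroup not containing it is again a chief factor $G$-isomorphic to it, and that for these $G$-groups $G$-isomorphism already yields $G$-equivalence, both of which are available in the excerpt. One could instead simply cite \cite[Lemma 10]{crown2}, but the argument above makes the dependence transparent.
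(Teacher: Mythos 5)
Your argument is correct, but it takes a different route from the paper: the paper offers no internal proof of Lemma~\ref{corone2} at all, simply recording it as an immediate consequence of \cite[Lemma 10]{crown2}, whereas you derive it inside the paper's own toolkit from Lemma~\ref{corone}. Your reduction is sound: $N\cap R_G(N)$ is normal in $G$ and lies in the minimal normal subgroup $N$, so it is $1$ or $N$; if it is $1$ the canonical map $n\mapsto nR_G(N)$ is a $G$-equivariant isomorphism onto $NR_G(N)/R_G(N)$ (both sides carrying the conjugation action), and $G$-isomorphism implies $G$-equivalence as the paper notes; and the case $N\leq R_G(N)$ is excluded exactly as you say, since the non-Frattini hypothesis supplies a maximal subgroup $M$ with $MN=G$, whence $MR_G(N)=G$ and Lemma~\ref{corone} would force $M=G$. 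Note also that there is no circularity within the paper, since Lemma~\ref{corone} is imported independently from \cite[Proposition 1.1]{crown}. What your approach buys is self-containedness: the statement is seen to follow from the one crown property already quoted, at the cost of leaning on Lemma~\ref{corone} rather than on the (slightly more general) profinite statement of \cite{crown2}, which the authors preferred to cite directly.
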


\section{Abelian minimal normal subgroups}

\begin{prop}\cite[Theorem 4]{jsc}\label{jscc} 
	Let $G$ be a group and let $N = \langle e_1,\dots, e_l\rangle$ be an abelian
	minimal normal subgroup of $G.$ If $G= \langle g_1,\dots, g_d\rangle$N then one of the following occurs:
	\begin{enumerate}
		\item $d(G) \leq d$ and either $G = \langle g_1,\dots, g_d\rangle$ or there exist $1 \leq i \leq d$ and $1 \leq  j \leq l$ such that $G = \langle g_1,\dots, g_{i-1}, g_ie_j, g_{i+1},\dots, g_d\rangle;$ 
		\item $d(G) = d + 1$ and $G = \langle g_1, \dots , g_d, x\rangle$ for any $1\neq x \in N.$
	\end{enumerate}
\end{prop}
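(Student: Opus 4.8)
The plan is to reduce at once to the case in which $N$ is complemented by a proper subgroup, and then, using the cocycle description of complements, to identify exactly which tuples $(g_1n_1,\dots,g_dn_d)$ with $n_i\in N$ generate $G$; the crucial point will be that the \emph{non}-generating tuples form an $\mathbb{F}_p$-subspace of $N^d$, so that as soon as it contains, in each coordinate, the elements $e_1,\dots,e_l$, it is forced to be all of $N^d$.

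First I would dispose of the trivial cases. If $N\le\frat(G)$, then $\langle g_1,\dots,g_d\rangle N=G$ and the fact that elements of $\frat(G)$ are non-generators give $\langle g_1,\dots,g_d\rangle=G$, so $d(G)\le d$ and alternative (1) holds with $G=\langle g_1,\dots,g_d\rangle$. So assume $N\not\le\frat(G)$; then $N$ is complemented, by Lemma \ref{l:1}(i). Put $H=\langle g_1,\dots,g_d\rangle$, so $HN=G$. If $N\le H$ then $H=G$ and again (1) holds; so assume $N\not\le H$. Then $H\cap N$ is normal in $G$ (being normalised by $H$ and, since $N$ is abelian, by $N$, while $HN=G$) and is properly contained in the minimal normal subgroup $N$, hence trivial: $H$ is a proper complement to $N$. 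One also checks that the maximal subgroups of $G$ supplementing $N$ are precisely the complements to $N$ (if $M$ is maximal with $MN=G$ then $M\cap N\trianglelefteq G$, forcing $M\cap N=1$), and indeed every complement to $N$ in $G$ is a maximal subgroup. Finally I would recall the classical bijection between the complements to $N$ in $G$ and the cocycle group $Z^1(H,N)$: identifying $G$ with $N\rtimes H$, the complement attached to $\gamma\in Z^1(H,N)$ is $H^\gamma=\{\gamma(h)h : h\in H\}$.

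Now the heart of the matter. For any $(n_1,\dots,n_d)\in N^d$, the subgroup $K=\langle g_1n_1,\dots,g_dn_d\rangle$ supplements $N$ (it surjects onto $G/N$, since each $g_in_i$ lies in $g_iN$), so $K=G$ if and only if $K$ is contained in no complement $H^\gamma$. Computing in $N\rtimes H$, one finds that $K\le H^\gamma$ precisely when $n_i=g_i^{-1}\gamma(g_i)g_i$ for every $i$; and since a cocycle is determined by its values on the generators $g_1,\dots,g_d$ of $H$, the assignment $\gamma\mapsto(g_1^{-1}\gamma(g_1)g_1,\dots,g_d^{-1}\gamma(g_d)g_d)$ is an injective $\mathbb{F}_p$-linear map $Z^1(H,N)\to N^d$ (note that $N$ is elementary abelian) whose image $\mathcal{B}$ is exactly the set of tuples $(n_1,\dots,n_d)$ with $\langle g_1n_1,\dots,g_dn_d\rangle\neq G$. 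In particular $\mathcal{B}$ is an $\mathbb{F}_p$-subspace of $N^d$.

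It remains to run the dichotomy. Adjoining to a minimal generating set of $H$ any $1\neq x\in N$ generates $G$, because the normal closure of $x$ in the resulting subgroup is a nontrivial normal subgroup of $G$ contained in $N$, hence is $N$; this yields $d(G)\le d(H)+1\le d+1$, and the same observation applied to $g_1,\dots,g_d$ shows that when $d(G)=d+1$ alternative (2) holds, i.e.\ $\langle g_1,\dots,g_d,x\rangle=G$ for every $1\neq x\in N$. Otherwise $d(G)\le d$, so by Gasch\"utz's Lemma \ref{gaz} some tuple of $N$-translates of the $g_i$ generates $G$, i.e.\ $\mathcal{B}\neq N^d$. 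If, for contradiction, $\langle g_1,\dots,g_{i-1},g_ie_j,g_{i+1},\dots,g_d\rangle\neq G$ for all $i$ and all $j$, then the tuple with $e_j$ in coordinate $i$ and identities elsewhere lies in $\mathcal{B}$ for all $i,j$; since $\mathcal{B}$ is a subspace and $e_1,\dots,e_l$ span $N$, $\mathcal{B}$ then contains the whole $i$-th coordinate copy of $N$ for every $i$, and hence $\mathcal{B}=N^d$, a contradiction. Therefore $\langle g_1,\dots,g_ie_j,\dots,g_d\rangle=G$ for some $i,j$, and alternative (1) holds. I expect the middle paragraph to be the main obstacle: recognising the non-generating translates as a linear subspace rests on the facts that a maximal supplement of an abelian minimal normal subgroup is automatically a complement (so the $H^\gamma$ are the only maximal overgroups one must avoid) and that the cocycle-to-translate map is $\mathbb{F}_p$-linear and injective.
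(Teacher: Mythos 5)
Your argument is correct, and there is nothing in the paper to compare it against: the paper does not prove this proposition, but imports it verbatim from \cite[Theorem 4]{jsc}, so your write-up is effectively supplying the (standard) proof behind the citation. The steps all check out: after disposing of the cases $N\leq \frat(G)$ and $N\leq\langle g_1,\dots,g_d\rangle$, the subgroup $H=\langle g_1,\dots,g_d\rangle$ is a complement of $N$; every maximal subgroup containing a supplement of the abelian minimal normal subgroup $N$ is again a complement, and the complements are parametrized by $Z^1(H,N)$, so the set $\mathcal{B}$ of non-generating tuples $(n_1,\dots,n_d)\in N^d$ is exactly the image of $\gamma\mapsto\bigl(g_1^{-1}\gamma(g_1)g_1,\dots,g_d^{-1}\gamma(g_d)g_d\bigr)$, hence a subgroup (indeed an $\mathbb{F}_p$-subspace, $N$ being elementary abelian) of $N^d$ --- for the final contradiction you only need closure under coordinatewise multiplication, so injectivity of the cocycle map is not even required. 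Combined with the normal-closure observation that $\langle g_1,\dots,g_d,x\rangle=G$ for every $1\neq x\in N$ (which gives $d(G)\leq d+1$ and alternative (2) when $d(G)=d+1$) and with Gasch\"utz's Lemma \ref{gaz} (which gives $\mathcal{B}\neq N^d$ when $d(G)\leq d$, so the basis tuples $(1,\dots,e_j,\dots,1)$ cannot all lie in $\mathcal{B}$), this yields exactly the stated dichotomy.
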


\begin{cor}\label{coroab}
	Let $N$ be a minimal abelian normal subgroup of a finite group $G$ and let $g_1N,\dots,g_dN$ be a minimum generating set of $G/N.$ We can find a minimal generating set for $G$ testing at most $d(|N|-1)+1\leq d|N|$  elements of $|G|^d.$  
\end{cor}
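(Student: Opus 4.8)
The plan is to apply Proposition \ref{jscc} directly, ordering the $d$-tuples so that the one cheap test comes first. Since $g_1N,\dots,g_dN$ is a \emph{minimum} generating set of $G/N$, we have $d(G/N)=d$ and $G=\langle g_1,\dots,g_d\rangle N$, so Proposition \ref{jscc} is available. I would invoke it with the valid (if wasteful) generating set of $N$ consisting of all of its nonidentity elements, so that the conclusion of part (1) reads: either $G=\langle g_1,\dots,g_d\rangle$, or there exist $i\in\{1,\dots,d\}$ and $1\neq n\in N$ with $G=\langle g_1,\dots,g_{i-1},g_in,g_{i+1},\dots,g_d\rangle$.

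First I would perform the single generating test checking whether $G=\langle g_1,\dots,g_d\rangle$. If it succeeds, then $d(G)\le d$; combined with the trivial bound $d(G)\ge d(G/N)=d$ this gives $d(G)=d$, so $\{g_1,\dots,g_d\}$ is a minimum generating set obtained with one test. If it fails, then the first alternative of Proposition \ref{jscc}(1) is excluded, so we are in the second alternative of (1) or in case (2). I would then run, for every $i\in\{1,\dots,d\}$ and every $n\in N\setminus\{1\}$, the generating test for the $d$-tuple $(g_1,\dots,g_{i-1},g_in,g_{i+1},\dots,g_d)$, a total of $d(|N|-1)$ tests.

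If one of these succeeds we have a generating set of $G$ of size $d$, hence (again using $d(G)\ge d$) a minimum one, and we stop. If all of them fail, then by Proposition \ref{jscc} we must be in case (2): $d(G)=d+1$ and $\{g_1,\dots,g_d,x\}$ is a minimum generating set of $G$ for \emph{any} $1\neq x\in N$, so we simply output such a set with no further test. Altogether this uses at most $1+d(|N|-1)=d(|N|-1)+1\le d|N|$ generating tests, which is the asserted bound.

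There is no real obstacle here: the whole content lies in Proposition \ref{jscc}. The only points requiring a little care are the bookkeeping — checking that the first failed test genuinely rules out $G=\langle g_1,\dots,g_d\rangle$, so that the remaining possibilities are exactly the $d(|N|-1)$ tuples enumerated above — and the observation that in case (2) no test is needed because Proposition \ref{jscc} guarantees that every nonidentity element of $N$ works.
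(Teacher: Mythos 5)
Your proposal is correct and follows essentially the same route as the paper: both apply Proposition \ref{jscc} with a generating set of $N$ consisting of nonidentity elements, test the unmodified $d$-tuple plus the $d(|N|-1)$ single-entry modifications, and conclude that if all fail then case (2) applies and any $1\neq x\in N$ completes a minimum generating set without further testing. Your explicit bookkeeping of the test count and the observation that $d(G)\ge d(G/N)=d$ certifies minimality are just slightly more detailed versions of what the paper leaves implicit.
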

\begin{proof}
	By the previous proposition, if $d(G)=d,$ then $\{g_1,\dots,g_d\}$ or $\{g_1,\dots,g_in,\dots g_d\}$, with $1\neq n \in N$ and $1\leq i\leq d,$ is a minimal generating set of $G$. If none of these sets generates $G$, then $d(G)=d+1$ and, for every $1\neq n\in N,$ $\{g_1,\dots,g_d,n\}$ is a generating set.
\end{proof}

\begin{rem}\label{remab}
If $N$ is an abelian minimal normal subgroup of a finite group $G$, then $N \cong C_p^l$ is an elementary abelian $p$-group and a generating set $e_1,\dots,e_l$ for $N$ can be obtained in $l$ steps. So we may improve the statement of the previous corollary stating that a minimum generating set of $G$ can be obtained from a minimum generating set   of $G/N$ in at most $l+dl+1=(d+1)l+1$ steps.
\end{rem}

\section{Non-abelian minimal normal subgroups}

\begin{lemma}\label{mint}Let $N$ be a non-abelian minimal normal subgroup of a finite group $G.$  Suppose that $G=\langle g_1,\dots,g_d\rangle N,$ with $d\geq 2$. Let $t=\left\lceil\frac{8}{5}+\log_{|N|}\delta_G(N)\right\rceil.$ If $t\leq d,$ then there exist $n_1,\dots,n_t\in N$ such that $$G=\langle g_1n_1,\dots,g_tn_t,g_{t+1},\dots,g_d\rangle.$$
\end{lemma}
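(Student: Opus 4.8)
The plan is to reduce Lemma \ref{mint} to Proposition \ref{smallt} by passing to the $N$-crown of $G$. Let $A$ be the irreducible $G$-group underlying $N$ (so $A$ is $G$-equivalent to $N$), let $L=L_A$ be the associated monolithic primitive group, and let $R=R_G(N)$, so that $G/R\cong L_\delta$ where $\delta=\delta_G(N)$. By Lemma \ref{corone2}, $NR/R$ is $G$-equivalent to $N$, hence is one of the $\delta$ minimal normal subgroups of the crown-based power $G/R$, and we may take it as the subgroup called $N$ in the statement of Proposition \ref{smallt} (note $NR/R\cong A=\soc(L)$ is non-abelian, and $|NR/R|=|N|$ since $N$ is a minimal normal subgroup not contained in $R$, the latter because $N$ is non-Frattini — Lemma \ref{l:1}(ii) — and $R$ contains no minimal normal subgroup $G$-equivalent to $N$ other than trivially).

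First I would check the hypotheses of Proposition \ref{smallt} transfer. From $G=\langle g_1,\dots,g_d\rangle N$ we get, modulo $R$, that $G/R=\langle \bar g_1,\dots,\bar g_d\rangle (NR/R)$, so the images $\bar g_1,\dots,\bar g_d$ generate $G/R$ modulo the chosen minimal normal subgroup. The quantity $t=\lceil 8/5+\log_{|N|}\delta\rceil$ is exactly the $t$ appearing in Proposition \ref{smallt}, since $|NR/R|=|N|$. So, provided $t\le d$, Proposition \ref{smallt} yields elements $\bar n_1,\dots,\bar n_t$ in $NR/R$ with $G/R=\langle \bar g_1\bar n_1,\dots,\bar g_t\bar n_t,\bar g_{t+1},\dots,\bar g_d\rangle$. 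Lifting, choose $n_1,\dots,n_t\in N$ mapping to $\bar n_1,\dots,\bar n_t$ (possible since $N\to NR/R$ is onto). Then $H:=\langle g_1n_1,\dots,g_tn_t,g_{t+1},\dots,g_d\rangle$ satisfies $HR=G$; and since the $g_in_i$ still generate $G$ modulo $N$ (as $n_i\in N$), also $HN=G$.

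Now the endgame is Lemma \ref{corone}: we have $HN=HR=G$ with $N$ a minimal normal subgroup of $G$, so $H=G$, which is precisely the conclusion. I expect the only real subtlety to be the bookkeeping around the crown: making sure that $NR/R$ really is a \emph{minimal} normal subgroup of $G/R$ of order $|N|$ and that it is non-abelian, so that Proposition \ref{smallt} applies verbatim, and that the value of $t$ computed with $|N|$ agrees with the value computed with $|NR/R|$. All of these follow from Lemma \ref{corone2} together with the structure of crown-based powers recalled in Section \ref{crowns}, so there is no genuine obstacle — the lemma is essentially Proposition \ref{smallt} reinterpreted inside the arbitrary ambient group $G$ via its $N$-crown.
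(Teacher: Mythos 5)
Your proposal is correct and follows essentially the same route as the paper: pass to $\bar G=G/R_G(N)\cong L_{\delta_G(N)}$, use Lemma \ref{corone2} to identify $NR/R$ as a minimal normal subgroup $G$-equivalent (hence isomorphic) to $N$, apply Proposition \ref{smallt} there, and finish with Lemma \ref{corone} via $HN=HR=G\Rightarrow H=G$. The extra bookkeeping you flag (that $|NR/R|=|N|$ and that the value of $t$ is unchanged) is exactly the content the paper leaves implicit, so there is nothing to add.
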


\begin{proof}
	Let $R=R_G(N)$ and $\delta=\delta_G(N).$  Moreover let $L$ be the monolithic primitive group associated to $N.$  We use the bar notation $\bar g$ to denote the elements of the factor group $\bar G=G/R$. By Lemma \ref{corone2}, $\bar N=NR/R$ is a minimal normal subgroup of $G/R$. Moreover it follows from Lemma \ref{corone} that $G=\langle g_1n_1,\dots,g_tn_t,g_{t+1},\dots,g_d\rangle$ if and only if $\bar G=\langle \bar g_1\bar n_1,\dots,\bar g_t\bar n_t,\bar g_{t+1},\dots\bar g_d\rangle$. Since $\bar G \cong L_\delta,$ with $L\cong G/C_G(N),$ the conclusion follows from Proposition \ref {smallt}
\end{proof}

\begin{cor}\label{corononab}
	Let $N$ be a minimal non-abelian normal subgroup of a finite group $G$ and let $g_1N,\dots,g_dN$ be a minimum generating set of $G/N.$ We may produce a minimum generating set for $G$ testing at most $|N|^{\lceil\frac{8}{5}+\log_{|N|}\delta_G(N)\rceil}$  elements of $G^d$ and at most $|N|^{\lceil\frac{8}{5}+\log_{|N|}\delta_G(N)\rceil}$ elements of $G^{d+1}.$
\end{cor}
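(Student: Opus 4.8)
The plan is to split the argument according to whether $t:=\bigl\lceil\tfrac{8}{5}+\log_{|N|}\delta_G(N)\bigr\rceil$ is at most $d$ or not, using throughout that $d\le d(G)\le d+1$: the lower bound is $d(G)\ge d(G/N)=d$, and the upper bound $d(G)\le d+1$ is the main result of \cite{alold}.

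I would first treat the case $t\le d$. Here $d\ge 2$ automatically, since $\delta_G(N)\ge 1$ forces $\tfrac{8}{5}+\log_{|N|}\delta_G(N)\ge\tfrac{8}{5}$ and hence $t\ge 2$; so Lemma \ref{mint} applies and produces $n_1,\dots,n_t\in N$ with $G=\langle g_1n_1,\dots,g_tn_t,g_{t+1},\dots,g_d\rangle$. In particular $d(G)=d$, and it suffices to apply the generating test to the $|N|^t$ tuples $(g_1n_1,\dots,g_tn_t,g_{t+1},\dots,g_d)$ as $(n_1,\dots,n_t)$ ranges over $N^t$: at least one generates $G$, and any such is a minimum generating set of $G$. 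This is at most $|N|^t$ elements of $G^d$ and, since $d(G)=d$ is already certified, no element of $G^{d+1}$.

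Next I would treat the case $t\ge d+1$, where I would simply brute-force the lifting via Gasch\"utz's Lemma \ref{gaz}. Since $g_1N,\dots,g_dN$ generate $G/N$, Lemma \ref{gaz} says that if $d(G)=d$ then $G=\langle g_1n_1,\dots,g_dn_d\rangle$ for some $(n_1,\dots,n_d)\in N^d$; hence running the generating test on all $|N|^d$ such tuples either yields a minimum generating set of size $d$, or certifies that $d(G)=d+1$. In the latter case $g_1N,\dots,g_dN,\,1\cdot N$ still generate $G/N$, so Lemma \ref{gaz} gives $G=\langle g_1n_1,\dots,g_dn_d,n_{d+1}\rangle$ for some $(n_1,\dots,n_{d+1})\in N^{d+1}$, and testing all $|N|^{d+1}$ such tuples yields a minimum generating set of size $d+1$. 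Because $t$ is an integer with $t\ge d+1$, we have $|N|^d\le|N|^{d+1}\le|N|^t$, so the number of tests in $G^d$ and the number in $G^{d+1}$ are each at most $|N|^t$.

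I do not expect a genuine obstacle: the substance is already contained in Lemma \ref{mint}, in Gasch\"utz's Lemma \ref{gaz}, and in the inequality $d(G)\le d+1$ from \cite{alold}. The only thing needing care is the bookkeeping that packages the two regimes under the single bound $|N|^t$ — in particular observing that when $t\le d$ Lemma \ref{mint} already forces $d(G)=d$, so no $(d+1)$-element tests are needed, whereas when $t\ge d+1$ the crude counts $|N|^d$ and $|N|^{d+1}$ both fit under $|N|^t$ for free.
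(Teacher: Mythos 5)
Your argument is essentially the paper's own proof, just organized as an explicit case split ($t\le d$ versus $t\ge d+1$) instead of the paper's unified ``test the $|N|^t$ perturbed tuples first, then deduce'' formulation: the two regimes rest on exactly the same ingredients, Lemma \ref{mint} in the first and Gasch\"utz's Lemma \ref{gaz} together with the bound from \cite{alold} in the second, and the bookkeeping under $|N|^{\lceil 8/5+\log_{|N|}\delta_G(N)\rceil}$ is carried out correctly.

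The one point that needs repair is the unqualified use of $d(G)\le d+1$. The result of \cite{alold}, as the paper itself invokes it, is $d(G)\le\max\{2,d(G/N)+1\}$; this equals $d+1$ only when $d\ge 1$. In the degenerate case $d=0$ (i.e.\ $G/N$ trivial, so $N=G$ is non-abelian and characteristically simple) your second case breaks down: $d(G)=2$, and testing single elements of $N$ can never produce a generating set, so the procedure as you describe it terminates without an answer. The paper sidesteps this by padding the given generators to $u=\max\{d,2\}$ entries with identity elements and testing $u$-tuples and $(u+1)$-tuples throughout, which is also what makes the corollary usable at the top of the chief series (where $G/N=1$) in the proof of Theorem \ref{main}. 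For $d=1$ your argument is fine, since $\max\{2,d+1\}=d+1$ there; so the fix is only the $d=0$ padding (or an explicit hypothesis $d\ge 1$), a one-line adjustment rather than a flaw in the method.
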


\begin{proof}
	Let	$u=\max\{d,2\}$. If $d<u,$ set $g_{d+1}=\dots =g_u=1.$ Let $t=\min\{u,\lceil\frac{8}{5}+\log_{|N|}\delta\rceil\}.$ First we test the
	$|N|^t$ $u$-tuples of kind $(g_1n_1,\dots,g_tn_t,g_{t+1},\dots,g_u)$,
	with $n_1,\dots,n_t \in N.$ If one of them is a generating set, we are done. If not, it follows from Lemma \ref{mint} that $u<t$. In this case we have tested that $\langle g_1n_1,\dots,g_un_u\rangle \neq G$ for every $(n_1,\dots,n_u)\in N^u,$ so if follows from Lemma \ref{gaz} that $d(G)>u.$
 Moreover,
	by the main result in \cite{alold}, $$u<d(G)\leq \max\{2,d(G/N)+1\}\leq u+1\leq t$$ Hence $d(G)=u+1\leq t$ and, by Lemma \ref{gaz},
 one of the   $(u+1)$-tuples of kind $(g_1n_1,\dots,g_dn_u,n_{u+1})$ is a minimum generating set
\end{proof}




In applying the previous corollary to design an algorithm to compute a minimum generating set of a finite group $G,$ an obstacle is given by the fact that,
even if a chief series of $G$ is available, it is not easy to recognize whether two factors of the series are $G$-equivalent. Hence, with the computation applications in mind, it is better to consider the weaker equivalence relation in which two factors are equivalent if they have the same order. Denoting with $\eta_G(A)$ the number of factors in a chief series of $G$ with order $|A|,$ we may state the weaker formulation of the previous corollary.

\begin{cor}\label{corononabb}
	Let $N$ be a minimal non-abelian normal subgroup of a finite group $G$. If we know a minimum generating set of $G/N$, then the number of generating tests needed to compute a minimum generating set for $G$ is at most $$2|N|^{\lceil\frac{8}{5}+\log_{|N|}\eta_G(N)\rceil}.$$
\end{cor}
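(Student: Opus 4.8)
The plan is to obtain Corollary \ref{corononabb} as an immediate consequence of Corollary \ref{corononab}, the only new point being to replace the crown multiplicity $\delta_G(N)$ by the purely numerical quantity $\eta_G(N)$.

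First I would recall from the theory of crowns developed in Section \ref{crowns} how $\delta_G(N)$ is read off from a chief series. The $N$-crown $I_G(N)/R_G(N)=\soc(G/R_G(N))$ is a direct product of $\delta_G(N)$ minimal normal subgroups of $G/R_G(N)$, each $G$-equivalent to $N$; refining $1<R_G(N)<I_G(N)<G$ to a chief series of $G$, the factors lying between $R_G(N)$ and $I_G(N)$ are exactly $\delta_G(N)$ chief factors, each of them $G$-equivalent to $N$. (Since $N$ is non-abelian it is automatically non-Frattini by Lemma \ref{l:1}(ii), so there is no Frattini/non-Frattini bookkeeping to worry about here.) Now $G$-equivalence of chief factors implies, in particular, isomorphism, hence equality of orders; therefore each of these $\delta_G(N)$ chief factors has order $|N|$, and so occurs among the $\eta_G(N)$ chief factors of order $|N|$ in a chief series of $G$. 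This yields the inequality $\delta_G(N)\le\eta_G(N)$, which is the one substantive step.

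It follows that $\log_{|N|}\delta_G(N)\le\log_{|N|}\eta_G(N)$, and hence
$$\left\lceil\tfrac{8}{5}+\log_{|N|}\delta_G(N)\right\rceil\le\left\lceil\tfrac{8}{5}+\log_{|N|}\eta_G(N)\right\rceil,\qquad\text{so}\qquad |N|^{\lceil\frac{8}{5}+\log_{|N|}\delta_G(N)\rceil}\le |N|^{\lceil\frac{8}{5}+\log_{|N|}\eta_G(N)\rceil}.$$
Then I would invoke Corollary \ref{corononab}: given a minimum generating set of $G/N$, one produces a minimum generating set of $G$ after testing at most $|N|^{\lceil 8/5+\log_{|N|}\delta_G(N)\rceil}$ elements of $G^d$ together with at most $|N|^{\lceil 8/5+\log_{|N|}\delta_G(N)\rceil}$ elements of $G^{d+1}$. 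Summing the two contributions and applying the displayed bound gives at most $2|N|^{\lceil 8/5+\log_{|N|}\delta_G(N)\rceil}\le 2|N|^{\lceil 8/5+\log_{|N|}\eta_G(N)\rceil}$ generating tests, as claimed.

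There is no real obstacle in this corollary: it is a deliberate weakening of Corollary \ref{corononab} motivated by the computational remark that $G$-equivalence of chief factors is awkward to detect, whereas counting factors of a given order is trivial once a chief series is in hand. The only point that deserves to be written carefully is the comparison $\delta_G(N)\le\eta_G(N)$, and I would phrase it so that it is transparent that it relies only on the description of $\delta_G(N)$ as a count of chief factors in a fixed chief series together with the fact that $G$-equivalent chief factors have equal order; in particular no input from the classification of finite simple groups is needed at this stage beyond what already went into Corollary \ref{corononab}.
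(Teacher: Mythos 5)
Your proposal is correct and follows exactly the route the paper intends: the paper states Corollary \ref{corononabb} as an immediate weakening of Corollary \ref{corononab}, and the only substantive point is the inequality $\delta_G(N)\le\eta_G(N)$, which you justify precisely as the authors would (the $\delta_G(N)$ chief factors arising from the $N$-crown are $G$-equivalent to $N$, hence have order $|N|$, hence are counted by $\eta_G(N)$). Nothing further is needed.
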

\section{Finding a minimal generating set in polynomial time}

\begin{proof}[Proof of Theorem \ref{main}]
Let $\mathcal A$ and $\mathcal B$ be, respectively,  the set of abelian and non-abelian factors in a given chief series of $G.$ Moreover let $\alpha(G)=\prod_{X\in \mathcal A}|A|$ and $\beta(G)=\prod_{Y\in \mathcal B}|B|.$
With iterated application of Corollaries  \ref{coroab} and \ref{corononabb}, we deduced that the number of generating tests required to obtain a minimum generating set for $G$ is at most
$$\sum_{X \in \mathcal A}d(G)|X|+\sum_{Y \in \mathcal B}2|Y|^{\log_{|Y|}\eta_G(Y)+\frac{13}{5}}.$$
Clearly $$\sum_{X\in \mathcal A}d(G)|X|\leq d(G)\prod_{X\in \mathcal A}|X|=d(G)\alpha(G).$$
We consider the equivalence classes in $\mathcal B$ in which two factors are equivalent if and only if they have the same order. 
Let $\mathcal B_1,\dots,\mathcal B_r$ be the equivalence classes in $\mathcal B$ and for every class choose a representative $Y_i$ for this class. Then we have 
$$\begin{aligned}\sum_{Y \in \mathcal B}2|Y|^{\log_{|Y|}\eta_G(Y)+\frac{8}{3}}&\leq	\sum_{1\leq i \leq r}2\eta_G(Y_i)|Y_i|^{\log_{|Y_i|}\eta_G(Y_i)+\frac{13}{5}}\\
	&=\sum_{1\leq i \leq r}2\eta_G(Y_i)^2|Y_i|^{\frac{13}{5}}\\
		&\leq \sum_{1\leq i \leq r}|Y_i|^{\frac{13}{5} \eta_G(Y_i)}  \\   
		&\leq \left(\prod_{1\leq i\leq r}|Y_i|^{\eta_G(Y_i)}   
		\right)^{\frac{13}{5}}=\beta(G)^{\frac{13}{5}}.
	\end{aligned}
$$

If follows that the number of required generating tests is at most $$d(G)\alpha(G)+{\beta(G)^{\frac{13}{5}}}\leq |G|^{\frac{13}{5}}.\qedhere$$
\end{proof}

\section{Permutation groups}

The best bound for the cardinality of a generating set of a permutation group is due to A. McIver and P. Neumann: the so call \lq\lq McIver-Neumann Half-$n$ Bound\rq\rq\ says that if $G$ is a subgroup of $\perm(n)$ and $G\neq \perm(3),$ then $d(G)\leq \lfloor n/2 \rfloor.$ This result is stated without a proof in \cite[Lemma 5.2]{min} and a sketch of the proof is given in \cite[Section 4]{cst}. 
The following result is crucial for our purposes. 

\begin{thm}\cite[Theorem 10.0.5]{nina-thesis}\label{ninatesi}. Let $G$ be a permutation group of degree n with s orbits. Then a chief series of $G$ has length at most $n-s.$
	\end{thm}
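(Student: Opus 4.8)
The plan is to prove the bound by strong induction on the degree $n$; throughout, write $\ell(H)$ for the chief length of a finite group $H$ (well defined by the Jordan--H\"older theorem). Two elementary properties of $\ell$ are used repeatedly. First, \emph{subadditivity along normal series}: if $1=H_0\trianglelefteq H_1\trianglelefteq\cdots\trianglelefteq H_m=H$ with every $H_i\trianglelefteq H$, then $\ell(H)\le\sum_{i=1}^m\ell(H_i/H_{i-1})$ --- refine the series to a chief series of $H$, observe that its portion between $H_{i-1}$ and $H_i$ is a normal series of $H_i/H_{i-1}$ with at most $\ell(H_i/H_{i-1})$ factors (refine further to a chief series of $H_i/H_{i-1}$), and sum. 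In particular $\ell(G)\le\ell(N)+\ell(G/N)$ for $N\trianglelefteq G$. Second, the \emph{maximal-subgroup step}: if $M$ is a maximal subgroup of $H$ then $\ell(H)\le\ell(M)+1$. Indeed, given a chief series $1=N_0<\cdots<N_u=H$, each $N_iM$ is a subgroup containing $M$ (as $N_i\trianglelefteq H$), hence equals $M$ or $H$, so there is a unique $i_0$ with $N_{i_0}M=M$ and $N_{i_0+1}M=H$; for $i\ne i_0$ we have $N_iM=N_{i+1}M$, and $|N_{i+1}:N_i|=|N_{i+1}M:N_iM|\cdot|N_{i+1}\cap M:N_i\cap M|$ then forces $N_i\cap M<N_{i+1}\cap M$, so the normal series $1=N_0\cap M\le\cdots\le N_u\cap M=M$ of $M$ has at least $u-1$ strict inclusions and $\ell(M)\ge u-1$. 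The base case $n\le1$ is trivial, and the inductive step splits according to whether $G$ is intransitive, transitive imprimitive, or transitive primitive.

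\emph{Intransitive case.} Let $G\le\perm(\Omega)$ have $n=|\Omega|\ge2$ and $s\ge2$ orbits. Fix an orbit $\Omega_1$, of size $n_1<n$, and put $N=\ker\bigl(G\to\perm(\Omega_1)\bigr)\trianglelefteq G$. Then $G/N$ is a faithful transitive permutation group of degree $n_1$, so $\ell(G/N)\le n_1-1$ by induction; and $N$ acts faithfully on $\Omega\setminus\Omega_1$ (an element of $N$ that is also trivial on $\Omega\setminus\Omega_1$ is trivial), a set of size $n-n_1<n$ on which $N$ has at least $s-1$ orbits, so $\ell(N)\le(n-n_1)-(s-1)$ by induction. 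Hence $\ell(G)\le\ell(N)+\ell(G/N)\le n-s$.

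\emph{Transitive case.} Now $s=1$, and we must show $\ell(G)\le n-1$. If $G$ is primitive, a point stabiliser $G_\alpha$ is a maximal subgroup and acts faithfully on $\Omega\setminus\{\alpha\}$, a set of size $n-1<n$ with at least one orbit, so the maximal-subgroup step and induction give $\ell(G)\le\ell(G_\alpha)+1\le(n-1-1)+1=n-1$. If $G$ is imprimitive, fix a nontrivial block system $\{B_1,\dots,B_b\}$ with $b$ blocks of size $k$, where $1<b,k<n$ and $bk=n$, and set $K=\ker\bigl(G\to\perm(\{B_1,\dots,B_b\})\bigr)\trianglelefteq G$; then $G/K$ is faithful transitive of degree $b$, so $\ell(G/K)\le b-1$. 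Each element of $K$ fixes every block setwise, so the subgroups $M_j=\ker\bigl(K\to\perm(B_{j+1}\cup\cdots\cup B_b)\bigr)$, $0\le j\le b$, form a normal series $1=M_0\trianglelefteq\cdots\trianglelefteq M_b=K$ in which $M_j/M_{j-1}$ is isomorphic to the permutation group induced by $M_j$ on the single block $B_j$; hence $\ell(M_j/M_{j-1})\le k-1$ by induction, and subadditivity gives $\ell(K)\le b(k-1)$. Therefore $\ell(G)\le\ell(K)+\ell(G/K)\le b(k-1)+(b-1)=bk-1=n-1$.

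The step needing the most care is the imprimitive one. It is tempting to bound $\ell(K)$ from the fact that $K$ embeds subdirectly in the direct product $K_1\times\cdots\times K_b$ of the permutation groups it induces on the blocks; but chief length is \emph{not} monotone under passing to normal subgroups --- for a non-abelian simple $T$, $\ell(T^m)=m$ grows while $\ell(T\wr\perm(m))=1+\ell(\perm(m))$ stays bounded --- so a bound of the form $\ell(K)\le\sum_i\ell(K_i)$ is false in general. The filtration by the kernels $M_j$ circumvents this precisely because each factor $M_j/M_{j-1}$ acts faithfully on a single block of size $k<n$, which is exactly the situation governed by the inductive hypothesis. Finally, it is worth noting that the entire argument is elementary and uses no part of the classification of finite simple groups.
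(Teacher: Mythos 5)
Your proof is correct. Note first that the paper itself offers no argument for this statement: it is quoted verbatim from Menezes' thesis (Theorem 10.0.5 of \cite{nina-thesis}), so there is no internal proof to compare against; what you have supplied is a self-contained, elementary replacement for that citation. Your two auxiliary facts are both sound: the subadditivity of chief length along a series whose terms are normal in the whole group follows, as you say, from refining to a chief series and counting the factors trapped in each segment, and your maximal-subgroup estimate $\ell(H)\le\ell(M)+1$ is a correct Dedekind/index-counting argument (each $N_i\cap M$ is normal in $M$, and the order formula forces a strict jump in $N_i\cap M$ at every index except the unique switch point $i_0$). The induction is also cleanly handled: in the intransitive case the kernel of the action on one orbit is faithful on the complement with at least $s-1$ orbits; in the primitive case the point stabiliser is maximal and faithful of degree $n-1$; and in the imprimitive case your filtration of the block kernel $K$ by the kernels $M_j$ of the actions on tails of blocks is exactly what is needed, since each factor $M_j/M_{j-1}$ embeds in $\sym(B_j)$ and so falls under the inductive hypothesis, giving $\ell(K)\le b(k-1)$ and $\ell(G)\le bk-1$. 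Your cautionary remark is also apt: chief length is not monotone under passing to normal subgroups (e.g.\ $T^m$ inside $T\wr\sym(m)$), so the naive subdirect-product bound would indeed be illegitimate, and the filtration avoids it. The argument is elementary and free of the classification, which is consistent with the nature of the cited result; the only thing to flag is that this theorem is an imported tool in the paper, not one of its contributions.
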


\begin{lemma}\label{permab} Let $G\leq \sym(n)$ and
 $N$  a minimal abelian normal subgroup of  $G$. If we know a minimum generating set for $G/N$, we can find a minimum generating set for $G$ in at most $(\lfloor n/2\rfloor+1)^2$ steps.
\end{lemma}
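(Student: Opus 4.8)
The plan is to combine Corollary~\ref{coroab} with the bound on chief length from Theorem~\ref{ninatesi} and the Half-$n$ Bound. First I would recall that, since $N$ is abelian, Corollary~\ref{coroab} guarantees that a minimum generating set of $G$ can be obtained from a minimum generating set of $G/N$ by testing at most $d(|N|-1)+1 \leq d\,|N|$ subsets, where $d = d(G/N)$. The issue is that $|N|$ need not be bounded by a function of $n$ alone in a convenient way; what we really want is a bound of the shape $(\lfloor n/2\rfloor + 1)^2$. So instead of using the crude estimate $d\,|N|$, I would use the sharper count $d(|N|-1)+1$ together with the following two facts: $d = d(G/N) \leq d(G) \leq \lfloor n/2\rfloor$ by the McIver--Neumann Half-$n$ Bound (noting $G/N$ is again a permutation group of degree at most $n$, and handling the tiny exceptional case $\perm(3)$ separately or observing it does not affect the final bound), and $|N|-1 \leq \lfloor n/2\rfloor$, which I expect to follow from Theorem~\ref{ninatesi}.

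The key step, then, is the inequality $|N| \leq \lfloor n/2\rfloor + 1$ for a minimal abelian normal subgroup $N$ of a transitive-or-not permutation group $G \leq \sym(n)$. The reasoning I would give: $N \cong C_p^l$ is elementary abelian, so a chief series of $G$ passing through $N$ has at least $l$ chief factors inside $N$ (each of order $p$), hence the chief length of $G$ is at least $l$. By Theorem~\ref{ninatesi}, if $G$ has $s$ orbits then the chief length is at most $n - s \leq n - 1$. Actually I would like to be more careful: since $N$ is nontrivial it acts nontrivially, but more usefully, a minimal normal subgroup that is a $p$-group of rank $l$ forces $p^l \leq $ something. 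The cleanest route is: $|N| = p^l \leq 2^l$, and $l$ is at most the chief length, which is at most $n - s$. To get $p^l \leq \lfloor n/2\rfloor + 1$ I would argue that in fact the relevant bound is $l \leq \lfloor n/2 \rfloor$ when $p = 2$ (and $p^l$ even smaller when $p$ is odd, since then $p^l \leq 2^{l} $ is too weak — rather one uses that an elementary abelian $p$-group of order $p^l$ acting faithfully needs degree at least... ). Here the honest statement is that $\lfloor n/2\rfloor + 1 \geq |N|$ should be extracted from combining Theorem~\ref{ninatesi} with the structure of $N$; concretely $|N| \leq 2^{\text{chief length}} $ is false in general, so instead one observes directly that a minimal abelian normal subgroup $N$ of $G \leq \sym(n)$ satisfies $|N| \leq \lfloor n/2 \rfloor + 1$ because $d(G) \geq \log_p |N| = l$ is wrong too — rather $d(N) = l$ and $N \trianglelefteq G$, and one uses that $G$ needs $\geq l$ chief factors hence $l \leq n - s$, combined with the elementary bound $p^l \leq \binom{n}{\text{orbit sizes}}$; the simplest clean bound giving $(\lfloor n/2\rfloor+1)^2$ is $d \leq \lfloor n/2\rfloor$ and $|N| \leq \lfloor n/2\rfloor + 1$.

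Putting the pieces together: the number of generating tests is at most $d(|N|-1) + 1 \leq \lfloor n/2\rfloor \cdot \lfloor n/2\rfloor + 1 \leq (\lfloor n/2\rfloor + 1)^2$, which is the claimed bound. I would also invoke Remark~\ref{remab} if the problem intends ``steps'' to include the computation of a generating set $e_1,\dots,e_l$ of $N$; in that case the count becomes $l + dl + 1 = (d+1)l + 1$, and since $l \leq |N| - 1 \leq \lfloor n/2\rfloor$ and $d + 1 \leq \lfloor n/2\rfloor + 1$, this is again at most $(\lfloor n/2\rfloor + 1)^2 $, so either reading is fine.

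\textbf{Main obstacle.} The delicate point is pinning down the exact inequality relating $|N|$ to $n$: getting $|N| \leq \lfloor n/2 \rfloor + 1$ rather than merely $|N| \leq 2^{n-s}$. I expect one must argue that a minimal abelian normal subgroup $N \cong C_p^l$ of a degree-$n$ permutation group satisfies $p^l \leq \lfloor n/2\rfloor + 1$; for $p = 2$ this is essentially the statement that $d(G) \geq l$ combined with the Half-$n$ Bound applied to $\langle N \rangle$ or the ambient group, and for odd $p$ the bound is only easier. The rest of the argument — assembling the two factors of size roughly $n/2$ into a product bounded by $(\lfloor n/2\rfloor+1)^2$ — is routine once that inequality is in hand.
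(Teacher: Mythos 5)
There is a genuine gap: the inequality $|N| \leq \lfloor n/2\rfloor + 1$ that your whole argument leans on is simply false. Take $G = A_4 \leq \sym(4)$ and $N = C_2^2$ the Klein four-group: then $|N| = 4$ but $\lfloor n/2\rfloor + 1 = 3$. More dramatically, $\AGL_d(2)$ in its natural action on $n = 2^d$ points has a minimal abelian normal subgroup of order $n$, so $|N|$ can be as large as $n$, not $n/2 + 1$. Your second paragraph circles around this point, tries several justifications (chief length, $p^l \leq 2^{n-s}$, etc.), and correctly flags it as the delicate step, but none of the proposed routes can work because the target inequality does not hold. Consequently neither your primary count $d(|N|-1)+1$ from Corollary \ref{coroab} nor your fallback $l \leq |N|-1 \leq \lfloor n/2\rfloor$ in the final paragraph is justified.

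The missing idea — and the paper's entire proof — is to apply the McIver--Neumann Half-$n$ Bound to $N$ itself, not only to $G$. Since $N \leq \sym(n)$ and $N$ is abelian (so $N \neq \sym(3)$, and one may assume $n>3$), one gets $l = d(N) \leq \lfloor n/2\rfloor$ directly; there is no need to bound $|N|$ at all. Then Remark \ref{remab} (which you correctly cite as the right counting device, replacing the $|N|-1$ nontrivial elements of $N$ by its $l$ generators $e_1,\dots,e_l$) gives at most $(d+1)l+1$ steps, and with $d \leq d(G) \leq \lfloor n/2\rfloor$ this is at most $(\lfloor n/2\rfloor+1)\lfloor n/2\rfloor + 1 \leq (\lfloor n/2\rfloor+1)^2$. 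So your assembly of the final bound is fine; what is wrong is the source of the bound on $l$: it must come from $d(N) \leq \lfloor n/2\rfloor$ via McIver--Neumann, not from any bound on $|N|$ in terms of $n$.
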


\begin{proof}We may assume $n>3.$ Then
 $\max\{d(G), d(N)\} \leq  n/2$, so the conclusion follows from Remark \ref{remab}.
\end{proof}

\begin{lemma}\label{permnonab} Let $G\leq \sym(n)$
and	$N$  a minimum non-abelian normal subgroup of  $G$. If we know a minimum generating set for $G/N$, we can find a minimum generating set for $G$  with at most $(n-1)|N|^{\frac{13}{5}}$ generating tests.
\end{lemma}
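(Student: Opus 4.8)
The plan is to bound the number of generating tests via Corollary~\ref{corononabb}, using Theorem~\ref{ninatesi} to control the quantity $\eta_G(N)$, and the McIver--Neumann Half-$n$ Bound to ensure that the exponent $\lceil \frac{8}{5}+\log_{|N|}\eta_G(N)\rceil$ stays manageable. First I would observe that, since $N$ is non-abelian, $|N|\geq 60$, so in particular $\log_{|N|}\eta_G(N)$ is small once we control $\eta_G(N)$. By Theorem~\ref{ninatesi}, a chief series of $G\leq \sym(n)$ has length at most $n-s\leq n-1$, so the number $\eta_G(N)$ of chief factors of order $|N|$ satisfies $\eta_G(N)\leq n-1$. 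Feeding this into Corollary~\ref{corononabb}, the number of generating tests needed is at most
\begin{equation*}
2|N|^{\left\lceil\frac{8}{5}+\log_{|N|}\eta_G(N)\right\rceil}\leq 2|N|^{\frac{8}{5}+\log_{|N|}(n-1)+1}=2(n-1)|N|^{\frac{13}{5}}.
\end{equation*}

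Next I would try to absorb the factor of $2$ to reach the claimed bound $(n-1)|N|^{13/5}$. The cheap way is to note that if $n\leq 3$ there is nothing non-abelian to worry about (the only non-abelian transitive group of degree $\leq 3$ is $\sym(3)$, whose unique minimal normal subgroup is abelian), so we may assume $n\geq 4$; then $|N|^{13/5}\geq 60^{13/5}$ is enormous compared to $2$, and one can check $2(n-1)|N|^{13/5}\leq (n-1)\cdot 2|N|^{13/5}$ — but that still has the stray $2$. The honest fix is to sharpen the exponent: since $|N|\geq 60$ we have $\log_{|N|}(n-1)+\frac{8}{5}<\log_{|N|}(n-1)+2$ with strict room to spare unless $n$ is large, and in fact one wants $\lceil \frac{8}{5}+\log_{|N|}\eta_G(N)\rceil \le \log_{|N|}\big((n-1)^{}/2\big)+\frac{13}{5}$, i.e. one shows $|N|^{\lceil\frac85+\log_{|N|}\eta_G(N)\rceil}\leq \tfrac12(n-1)|N|^{13/5}$, so that $2\cdot(\cdot)\leq (n-1)|N|^{13/5}$. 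This holds because $|N|\geq 5$ already gives $|N|^{\lceil x\rceil}\le |N|^{x+1}\le \frac{|N|}{2}|N|^{x}$ is false in general, so the right move is instead to use the \emph{refined} counting: Corollary~\ref{corononab} only asks for $|N|^{\lceil\frac85+\log_{|N|}\delta_G(N)\rceil}$ tests in $G^d$ and the same in $G^{d+1}$, and $\delta_G(N)\le \eta_G(N)\le n-1$, and crucially $|N|\ge 60$ forces $\lceil\frac85+\log_{|N|}(n-1)\rceil = \log_{|N|}(n-1)+c$ with $c\le 2$; pushing $c$ into the exponent as $|N|^c\le |N|^{13/5}/|N|^{3/5}\le |N|^{13/5}/60^{3/5}<|N|^{13/5}$ leaves slack enough to swallow the factor $2$, since $60^{3/5}>10>2$.

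The proof I would actually write is therefore: reduce to $n\geq 4$; apply Theorem~\ref{ninatesi} to get $\eta_G(N)\leq n-s\leq n-1$; apply Corollary~\ref{corononabb} to bound the number of tests by $2|N|^{\lceil 8/5+\log_{|N|}\eta_G(N)\rceil}\leq 2|N|^{2+\log_{|N|}(n-1)}=2(n-1)|N|^{2}$; and finally observe $2(n-1)|N|^{2}\leq (n-1)|N|^{13/5}$ because $|N|^{3/5}\geq 60^{3/5}>2$. The main obstacle is purely bookkeeping around the ceiling function: one must be careful that $\lceil \frac{8}{5}+\log_{|N|}\eta_G(N)\rceil\leq 2+\log_{|N|}\eta_G(N)$ (true since $\frac85<2$ and the ceiling adds at most $1$), and that the leftover multiplicative constants from both this rounding and the factor of $2$ in Corollary~\ref{corononabb} are comfortably absorbed by the gap between the exponent $2$ and the target exponent $\frac{13}{5}$, which is guaranteed by $|N|\geq 60$. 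No deep input beyond the already-cited Theorem~\ref{ninatesi} and Corollary~\ref{corononabb} is needed.
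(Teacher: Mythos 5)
Your overall strategy is the same as the paper's: feed the bound on the chief length coming from Theorem~\ref{ninatesi} into the corollary of the preceding section. The paper does this with Corollary~\ref{corononab} and the crown parameter $\delta_G(N)$, writing the number of tests as at most $|N|^{\log_{|N|}\delta_G(N)+\frac{13}{5}}=\delta_G(N)|N|^{\frac{13}{5}}$ and then bounding $\delta_G(N)\leq n-1$; you use Corollary~\ref{corononabb} and $\eta_G(N)\leq n-1$, which is the same idea since $\delta_G(N)\leq\eta_G(N)$. That part is fine.

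The gap is in the arithmetic you use to absorb the constants. The inequality $\lceil\tfrac{8}{5}+x\rceil\leq 2+x$ (with $x=\log_{|N|}\eta_G(N)$), which you justify by ``$\tfrac{8}{5}<2$ and the ceiling adds at most $1$'', is false: for $x=\tfrac12$ one gets $\lceil 2.1\rceil=3>2.5$. What the ceiling actually gives is $\lceil\tfrac{8}{5}+x\rceil\leq\tfrac{8}{5}+x+1=x+\tfrac{13}{5}$, hence $|N|^{\lceil\frac{8}{5}+\log_{|N|}\eta_G(N)\rceil}\leq\eta_G(N)\,|N|^{\frac{13}{5}}\leq(n-1)|N|^{\frac{13}{5}}$ --- which is precisely the estimate the paper makes, but which leaves no slack of the form $|N|^{3/5}$ with which to swallow the factor $2$ from Corollary~\ref{corononabb}. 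So your ``honest fix'' collapses to $2(n-1)|N|^{\frac{13}{5}}$, not $(n-1)|N|^{\frac{13}{5}}$. To do better than a factor of $2$ one has to go back to the two-phase count inside the proof of Corollary~\ref{corononab}: the second batch of $(u+1)$-tuples is only tested when the first batch had size $|N|^{u}\leq|N|^{\lceil\cdot\rceil-1}$, so the true total is at most $(1+1/|N|)\,|N|^{\lceil\cdot\rceil}$ rather than $2|N|^{\lceil\cdot\rceil}$. (To be fair, the paper's own one-line proof elides exactly this bookkeeping by quoting Corollary~\ref{corononab} without the factor $2$; but the specific inequality you rely on to eliminate it is not valid.)
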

\begin{proof}
By Corollary  \ref{corononab}, we need at most $|N|^{\log_{|N|}\delta_G(N)+\frac{13}{5}}=\delta_G(N)|N|^{\frac{13}{5}}$ generating tests. Moreover from Theorem \ref{ninatesi}, it follows $\delta_G(N)\leq n-1.$
\end{proof}

In the following we will denote by $\lambda(G)$ the maximum of the orders of the non-abelian chief factors of $G$ (and we will set $\lambda(G)=1$ if $G$ is solvable).	Combining Theorem \ref{ninatesi} and Lemmas \ref{permab} and \ref{permnonab}
we obtain:

\begin{thm}
Let $G\leq \sym(n).$ Then we may obtain a minimum generating set of $G$ with at most $n^2\lambda(G)^{13/5}$ generating test.
\end{thm}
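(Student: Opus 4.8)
The plan is to bound the number of generating tests by iterating over a chief series and summing the cost contributed by each chief factor, exactly as in the proof of Theorem \ref{main} but using the permutation-specific estimates. First I would fix a chief series $1 = N_0 < N_1 < \cdots < N_u = G$ of $G \le \sym(n)$ and process the factors $N_{i}/N_{i-1}$ from top to bottom, so that at each stage we already have a minimum generating set of $G/N_i$ and want one for $G/N_{i-1}$. Each chief factor of $G/N_{i-1}$ is $G/N_{i-1}$-isomorphic to a chief factor of $G$, so it is either abelian or non-abelian, and I would split the sum accordingly.

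For an abelian step, Lemma \ref{permab} gives a cost of at most $(\lfloor n/2\rfloor + 1)^2 \le n^2$ generating tests (using $n > 3$; the tiny cases and $G = \sym(3)$ are handled trivially, since there the problem is of bounded size). For a non-abelian step with factor $N$, Lemma \ref{permnonab} gives at most $(n-1)|N|^{13/5} \le n\,\lambda(G)^{13/5}$ generating tests, since $|N| \le \lambda(G)$ by definition of $\lambda$. Next I would invoke Theorem \ref{ninatesi}: a chief series of $G$ (which has $s \ge 1$ orbits) has length at most $n - s \le n - 1 < n$, so $u < n$; in particular the number of abelian steps is less than $n$ and the number of non-abelian steps is less than $n$. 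Multiplying the per-step bounds by the number of steps of each type and adding, the total number of generating tests is at most
\[
n \cdot n^2 + n \cdot n\,\lambda(G)^{13/5} = n^3 + n^2 \lambda(G)^{13/5}.
\]
To land on the claimed bound $n^2 \lambda(G)^{13/5}$ I would be slightly more careful with the bookkeeping: the abelian contribution $\sum_{X \in \mathcal A} d(G/N)|X| \le d(G) \prod_X |X| \le \alpha(G)\,n/2$ can be absorbed into the non-abelian term whenever $G$ is non-solvable (so $\lambda(G) \ge 60$), and when $G$ is solvable the abelian bound $n^2/4 \le n^2 = n^2\lambda(G)^{13/5}$ already suffices; alternatively one notes that the number of \emph{distinct} non-abelian factor orders is at most $n$ and re-runs the geometric-series collapse from the proof of Theorem \ref{main} with the extra factor $n$ coming from Theorem \ref{ninatesi}.

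The main obstacle I anticipate is not the individual estimates — those are handed to us by Lemmas \ref{permab} and \ref{permnonab} and Theorem \ref{ninatesi} — but rather making the final arithmetic come out to precisely $n^2\lambda(G)^{13/5}$ rather than the slightly weaker $n^3 + n^2\lambda(G)^{13/5}$. The cleanest route is to observe that a chief series realizing the bound of Theorem \ref{ninatesi} has \emph{total} length $\le n-1$, so rather than charging $n^2$ per abelian step and $n\lambda(G)^{13/5}$ per non-abelian step and multiplying each by $n$, one charges each of the $\le n-1 < n$ steps by $\max\{(\lfloor n/2\rfloor+1)^2,\; (n-1)\lambda(G)^{13/5}\}$ and then notes that for $n$ large this maximum is $(n-1)\lambda(G)^{13/5} \le n\lambda(G)^{13/5}$ in the non-solvable case, giving total $\le n \cdot n\lambda(G)^{13/5} = n^2\lambda(G)^{13/5}$; the solvable case and small $n$ are checked separately. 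I would write the proof in that order: reduce to $n$ large, bound each step, count the steps with Theorem \ref{ninatesi}, and conclude.
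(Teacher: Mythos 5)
Your overall strategy coincides with the paper's: the paper offers no written argument for this theorem beyond the phrase ``combining Theorem \ref{ninatesi} and Lemmas \ref{permab} and \ref{permnonab}'', and your honest first computation, $n^3+n^2\lambda(G)^{13/5}$, is what that combination actually yields. The genuine difficulty is precisely the one you flag, namely the abelian contribution: with up to $n-1$ abelian chief factors each charged $(\lfloor n/2\rfloor+1)^2$ by Lemma \ref{permab}, the naive total is of order $n^3/4$, which does not fit inside $n^2\lambda(G)^{13/5}$ when $\lambda(G)$ is bounded (e.g.\ $G$ solvable, so $\lambda(G)=1$). Unfortunately, none of your three proposed repairs closes this gap. (i) The quantity $d(G)\alpha(G)\le \alpha(G)\,n/2$ cannot be absorbed into $n^2\lambda(G)^{13/5}$: $\alpha(G)=\prod_{X\in\mathcal{A}}|X|$ is a product of the abelian factor orders and can be as large as $2^{n/2}$; moreover this estimate comes from Corollary \ref{coroab} rather than Lemma \ref{permab}, which defeats the purpose of the permutation-specific section. (ii) In the solvable case, $(\lfloor n/2\rfloor+1)^2\approx n^2/4$ is the cost of a \emph{single} abelian step, not of all of them; multiplied by the $\le n-1$ steps the total is still $\approx n^3/4$. (iii) The comparison $(\lfloor n/2\rfloor+1)^2\le (n-1)\lambda(G)^{13/5}$ goes the wrong way for large $n$: if, say, $\lambda(G)=60$ is fixed while $n\to\infty$, the left side grows like $n^2/4$ and the right side only linearly, so the maximum is eventually the abelian term, contrary to what you assert.

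What is actually needed is a \emph{global} accounting of the abelian cost rather than a per-step one. By Remark \ref{remab}, the abelian factor $C_{p_i}^{l_i}$ costs at most $(d_i+1)l_i+1$ with $d_i\le\lfloor n/2\rfloor$ by McIver--Neumann, so the total abelian cost is at most $(\lfloor n/2\rfloor+1)\sum_i l_i$ plus the number of abelian factors; here $\sum_i l_i$ is the number of abelian composition factors of $G$, which is $O(n)$ for $G\le\sym(n)$ (for instance via the subgroup-chain length bound of Cameron--Solomon--Turull \cite{cst}, already cited in the paper for other purposes). This reduces the abelian contribution from $O(n^3)$ to $O(n^2)$, which is the order of magnitude the theorem requires; one must then still check that the $O(n^2)$ abelian term plus the non-abelian term $(n-1)^2\lambda(G)^{13/5}$ fits under $n^2\lambda(G)^{13/5}$, which calls for a separate (routine but not omittable) argument in the regime where $\lambda(G)$ is small relative to $n$. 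Without input of this kind --- which neither your proposal nor the paper supplies --- the combination of the three cited results only proves the weaker bound $n^3+n^2\lambda(G)^{13/5}$.
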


\begin{cor}
Let $G\leq \sym(n).$ If every non-abelian composition factor of $G$ has order at most $c$, then  we may obtain a minimum generating set of $G$ with at most $n^2c^{\frac{13}{20}n}$ generating tests.
\end{cor}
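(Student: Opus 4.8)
The plan is to obtain this corollary as a direct consequence of the theorem immediately preceding it, which bounds the number of generating tests by $n^2\lambda(G)^{13/5}$. Since $\tfrac{13}{5}\cdot\tfrac14=\tfrac{13}{20}$, it suffices to establish the single inequality $\lambda(G)\le c^{\,n/4}$: granting this, $\lambda(G)^{13/5}\le c^{\,13n/20}$, and the claimed bound $n^2c^{\,13n/20}$ follows at once. If $G$ is solvable there is nothing to prove, as then $\lambda(G)=1$ by convention.

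To bound $\lambda(G)$, recall that every non-abelian chief factor $A$ of $G$ is $G$-isomorphic to a direct power $S^k$ of a non-abelian simple group $S$, and that $S$ is then a composition factor of $G$, so $|S|\le c$ by hypothesis. Hence $|A|=|S|^{k}\le c^{k}$, and writing $\kappa(G)$ for the largest width $k$ occurring among the non-abelian chief factors of $G$, we get $\lambda(G)\le c^{\,\kappa(G)}$. The corollary therefore reduces to the purely permutation-theoretic assertion that $\kappa(G)\le n/4$. I will aim for the sharper and essentially best-possible bound $5\,\kappa(G)\le n$: it is attained already by the base group $\alt(5)^{\,m}$ of the imprimitive wreath product $\alt(5)\wr\alt(5)$ acting on $n=5m$ points, where $k=m=n/5$.

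The argument for $5k\le n$ rests on two inputs: every non-abelian simple group has minimal faithful permutation degree at least $5$, and the $k$ factors $S_1,\dots,S_k$ of a single non-abelian chief factor are permuted transitively by $G$, hence all have supports of equal size. Assume first that the chief factor is an honest minimal normal subgroup $M=S_1\times\cdots\times S_k\le\sym(n)$ acting faithfully. Decompose the points into $M$-orbits $O_\ell$; on each orbit $M$ acts through a faithful transitive quotient $\prod_{i\in J_\ell}S_i$, where $J_\ell$ records the factors moving a point of $O_\ell$. Every factor lies in at least one $J_\ell$, so $\sum_\ell|J_\ell|\ge k$, and the estimate follows once one knows that a faithful transitive action of a product of $m$ non-abelian simple groups has degree at least $5m$; summing over $\ell$ then gives $n=\sum_\ell|O_\ell|\ge 5\sum_\ell|J_\ell|\ge 5k$.

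The main obstacle is precisely this degree estimate, because the supports of the factors within a transitive action are far from disjoint: a non-trivial normal subgroup of a transitive group has no fixed points, so each $S_i$ moves every point of its orbit. The resolution is that heavy overlap only increases the degree. One proves the estimate by induction on $m$: choosing one factor $S_1$, its orbits form a block system with blocks of size at least $5$; if $S_1$ is transitive then the remaining factors embed into $C_{\sym(O)}(S_1)$, which is semiregular, forcing a degree at least $|S_1|^{\,m-1}\ge 60^{\,m-1}$, which exceeds $5m$, while otherwise one passes to the induced faithful transitive action on the block set and controls which factors fall into its kernel. The genuinely delicate point that remains is the reduction from a general chief factor $H/K$ — a section in which a non-trivial $K$ can disperse the factors' supports — to the minimal-normal-subgroup case treated above. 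Here I would exploit that a non-abelian chief factor is non-Frattini (Lemma \ref{l:1}(ii)) and hence $G$-equivalent to the socle of a monolithic primitive quotient; the key structural fact to verify is that ascending to a chief factor higher in the series corresponds to coarsening the relevant block system and so cannot increase the number of simple factors, which keeps the width bounded by the degree-$n$ minimal-normal-subgroup estimate. Once $5k\le n$ is secured for all non-abelian chief factors, we obtain $\lambda(G)\le c^{\,n/5}\le c^{\,n/4}$, completing the proof.
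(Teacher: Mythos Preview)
Your reduction is exactly right and matches the paper: the whole corollary comes down to showing that every non-abelian chief factor $H/K\cong S^{u}$ of $G\le\sym(n)$ satisfies $u\le n/4$, so that $\lambda(G)\le c^{n/4}$ and the preceding theorem applies.

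Where you diverge is in how you bound $u$. The paper's argument is a two-line trick that completely sidesteps the difficulty you flag. Let $X/K$ be a Sylow $2$-subgroup of $H/K$; then $X/K\cong P^{u}$ with $P\in\Syl_2(S)$, and since a Sylow $2$-subgroup of a non-abelian simple group is never cyclic one has $d(X/K)=u\,d(P)\ge 2u$. But $X/K$ is a quotient of the subgroup $X\le\sym(n)$, so $2u\le d(X/K)\le d(X)\le n/2$ by the McIver--Neumann half-$n$ bound already quoted in the paper, and $u\le n/4$ follows at once. The key feature is that $d(\cdot)$ is monotone under quotients, so one never has to realise the chief factor itself as a subgroup of $\sym(n)$.

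Your permutation-degree approach, by contrast, does need such a realisation, and this is where the genuine gap lies. The argument you sketch for a minimal normal subgroup $M=S_1\times\cdots\times S_k\le\sym(n)$ is plausible (though the inductive step in the transitive case is not fully justified). But the ``delicate point'' you single out is not merely delicate: the fix you propose does not work. Passing to the crown or to a monolithic primitive quotient $G/N\cong L_A$ replaces $G$ by a \emph{quotient}, which has no reason to act faithfully on $n$ points, so the degree-$n$ hypothesis is lost. Nor is it true that ascending the chief series ``cannot increase the number of simple factors'': for instance in $G=\alt(5)\times(\alt(5)\wr C_2)$ the bottom non-abelian chief factor has width $1$ and the next one has width $2$. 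As written, your argument therefore bounds $k$ only for chief factors sitting at the bottom of the series, which is not sufficient; the Sylow-$2$ trick is precisely what lets the paper handle an arbitrary section $H/K$ uniformly.
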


\begin{proof}
Let $H/K$  be a non-abelian chief factor of $G.$ Then $H/K\cong S^u,$ with $u \in \mathbb N$ and $S$ a finite non-abelian simple group. Let $X/K$ be  a Sylow 2-subgroup of $H/K.$ We have $X/K \cong P^u$  with $P$ a Sylow 2-subgroup of $S$. Since $P$ cannot be cyclic, 
$2u\leq d(X/K) \leq d(X)\leq n/2,$  hence $u\leq n/4$  and $|H/K|\leq c^{n/4},$
so the conclusion follows from the previous Theorem.
\end{proof}



\begin{thebibliography}{99}

\bibitem{Complexity-text}
S. Arora and B. Barak, Computational complexity: a modern approach. Cambridge University Press (2009).
	
\bibitem{AT}
V. Arvind, and J. Tor\'{a}n, The complexity of quasigroup isomorphism and the minimum generating set problem, ISAAC 2006, Proceedings 17 (2006), 233--242.
\bibitem{classes}
A. Ballester-Bolinches and L.~M. Ezquerro, Classes of finite
	groups, Mathematics and Its Applications (Springer), vol. 584, Springer,
Dordrecht (2006).
\bibitem{cst} P. Cameron, R. Solomon and A. Turull, Chains of subgroups in symmetric groups, J. Algebra 127 (1989), no. 2, 340--352.
\bibitem{austr} F. Dalla Volta and A. Lucchini, Finite groups that need more generators than any proper quotient, J. Austral. Math. Soc. Ser. A 64 (1998), no. 1, 82--91.
\bibitem{pr} F. Dalla Volta and A. Lucchini, The smallest group with nonzero presentation rank, J. Group Theory 2 (1999), 147--155.
\bibitem{london} E. Detomi and A. Lucchini, Probabilistic generation of finite groups with a unique minimal normal subgroup, J. Lond. Math. Soc. (2) 87 (2013), no. 3, 689--706.	
\bibitem{crown} E. Detomi and A. Lucchini,  Crowns and factorization of the probabilistic zeta function of a finite group, J. Algebra 265 (2003), no. 2, 651--668.
\bibitem{DT}
B. Das and D. Thakkar, Algorithms for the Minimum Generating Set Problem, arXiv preprint arXiv:2305.08405 (2023).
\bibitem{crown2} E. Detomi and A. Lucchini,  Crowns in profinite groups and applications, Noncommutative algebra and geometry, 47--62, Lect. Notes Pure Appl. Math., 243, Chapman \& Hall/CRC, Boca Raton, FL, 2006. 
\bibitem{gaz1} W. Gasch\"utz, Zu einem von B. H. Neumann und H. Neumann gestellten Problem, Math. Nachr. 14 (1955), 249-252.
\bibitem{Gaschutz} W. Gasch\"{u}tz, Praefrattinigruppen, Arch. Math. {13} (1962), 418--426.
\bibitem{JL} P. Jim\'{e}nez-Seral and J. Lafuente, On complemented non-abelian chief factors of a finite group, Israel J. Math. {106} (1998), 177--188. 
\bibitem{ku} S. Kohl, A bound on the order of the outer automorphism group of a
finite simple group of given order, online note (2003), available under
\texttt{http://www.gap-system.org/DevelopersPages/StefanKohl/preprints/ outbound.pdf}.
\bibitem{alold} A. Lucchini, Generators and minimal normal subgroups, Arch. Math. (Basel) 64 (1995), no. 4, 273--276.
\bibitem{Lxdir} A. Lucchini, The $X$-Dirichlet polynomial of a finite group,  J. Group Theory, 8 (2005), 171--178.
\bibitem{unico} A. Lucchini and F. Menegazzo, 
Generators for finite groups with a unique minimal normal subgroup,
Rend. Sem. Mat. Univ. Padova 98 (1997), 173?191.
\bibitem{jsc} A. Lucchini and F. Menegazzo, Computing a set of generators of minimal cardinality in a solvable group, J. Symbolic Comput. 17 (1994), no. 5, 409--420.
\bibitem{index} A. Lucchini and M. Morigi,  Recognizing the prime divisors of the index of a proper subgroup, J. Algebra 337 (2011), 335--344. 
\bibitem{min} A. McIver, P. Neumann,
Enumerating finite groups, Quart. J. Math. Oxford Ser. (2) 38 (1987),
no. 152, 473-488.
\bibitem{nina-thesis} N. Menezes, Random generation and chief length of finite groups, PhD Thesis, http://research-repository.st-andrews.ac.uk/handle/10023/3578.
\bibitem{seress} A. Seress, Permutation group algorithms, volume 152. Cambridge University
Press, 2003.

\end{thebibliography}
\end{document}